\documentclass[12pt]{article}
\usepackage{amsmath,amssymb,amsfonts}
\usepackage{graphicx,psfrag,epsf}
\usepackage{enumerate}
\usepackage{url} 

\newcommand{\blind}{1}

\addtolength{\oddsidemargin}{-.5in}%
\addtolength{\evensidemargin}{-.5in}%
\addtolength{\textwidth}{1in}%
\addtolength{\textheight}{1.3in}%
\addtolength{\topmargin}{-.8in}%

\def\ov{\overline}

\newtheorem{theorem}{Theorem}[section]
\newtheorem{proposition}[theorem]{Proposition}
\newtheorem{definition}[theorem]{Definition}
\newtheorem{lemma}[theorem]{Lemma}

\newtheorem{example}[theorem]{Example}
\newtheorem{counterexample}[theorem]{Counterexample}

\newtheorem{remark}[theorem]{Remark}
\newenvironment{proof}{\emph{Proof.}} {\quad \hfill $\blacksquare$ \newline}

\usepackage{color}
\usepackage{soul} 

\begin{document}

\def\spacingset#1{\renewcommand{\baselinestretch}%
{#1}\small\normalsize} \spacingset{1}


{
  \date{}
  \title{\bf New stochastic comparisons based on Tail Value at Risk measures}
  \author{F\'elix Belzunce\thanks{
    The research of  F\'elix Belzunce is funded by the Ministerio de Econom\'ia, Industria y Competitividad (Spain) under grant \textit{MTM2016-79943-P} (AEI/ FEDER, UE).}\hspace{.2cm}\\
   Dpto. Estad\'istica e Investigaci\'on Operativa, Universidad de Murcia
\\ Facultad de Matem\'aticas, Campus de Espinardo \\
30100 Espinardo (Murcia), SPAIN\\ \texttt{belzunce@um.es}\\
    and\\
    Alba M. Franco-Pereira\thanks{
    Alba M. Franco-Pereira acknowledges support received from the Ministerio de Econom\'ia y Competitividad (Spain) under grant \textit{MTM2017-89422-P} and has received financial support from the Xunta de Galicia (Centro Singular de Investigación de Galicia accreditation 2016-2019) and the European Union (European Regional Development Fund - ERDF). She also acknowledges funding from Banco Santander and Complutense University of Madrid (project \textit{PR26/16-5B-1})}\hspace{.2cm}\\
    Instituto de Matemática Interdisciplinar (IMI) and\\ Dpto. Estadística e Investigación Operativa,\\ Universidad Complutense de Madrid \\ Facultad de Ciencias, Plaza Ciencias 3 \\ 28040
Madrid, SPAIN\\ 
\texttt{albfranc@ucm.es}\\
    and \\
    Julio Mulero\thanks{
    Julio Mulero wants to acknowledge the support received from the Conselleria d'Educaci\'o, Investigaci\'o, Cultura i Esport (Generalitat de la Comunitat Valenciana) under grant \textit{GV/2017/015} and the Ministerio de Econom\'ia, Industria y Competitividad (Spain) under grant \textit{MTM2016-79943-P} (AEI/ FEDER, UE).}\hspace{.2cm}\\
    Dpto. Matem\'aticas, Universidad de
Alicante \\ Facultad de Ciencias, Apartado de correos 99 \\ 03080
Alicante, SPAIN \\ \texttt{julio.mulero@ua.es}}
  \maketitle
  \newpage
} 

\if1\blind
{
  \bigskip
  \bigskip
  \bigskip
  \begin{center}
    {\LARGE\bf New stochastic comparisons based on tail values at risk}
\end{center}
  \medskip
} \fi

\bigskip
\begin{abstract}
 In this paper we provide a new criterion for the comparison of claims, when we have conditional claims arising in  stop loss contracts or contracts with franchise deductible. These stochastic comparisons are made on the basis of the Tail Value at Risk (also known as conditional tail expectation), just for a fixed level and beyond. In particular, we explain the interest of comparing these quantities, study some preservation properties and, in addition, we provide sufficient conditions for its study. Finally we illustrate its usefulness with some examples.
\end{abstract}

\noindent%
{\it Keywords:}  Value at Risk, residual claims, stochastic orders.
\vfill

\newpage
\spacingset{1.45} 
\section{Introduction and motivation} Let $X$ be a random risk  that represents, for example, the future claims of an insurance company with distribution function $F$. The Value at Risk of $X$ for some prescribed confidence level $p\in(0, 1)$ (or, simply, quantile at $p$), is denoted as $\text{VaR}[X;p]$ (or $F^{-1}(p)$ for the sake of simplicity), and given by
\begin{equation}\label{quantile}
\text{VaR}[X;p]=\inf\{x: F(x)\geq p\},\text{ for all }p\in(0,1).
\end{equation}
In this context, this value is the maximum claim which can occur with $100p\%$ confidence over a certain period of time; see, for instance, H\"urlimann (2002, 2003).  The VaR is nowadays the most commonly used measure of risk, but it does not give any information about the claim amount beyond the VaR level. However, the Value at Risk is not a coherent risk measure as it does not satisfy the subadditivity property. An alternative to the VaR, given in H\"urlimann (2003) is the Tail Value at Risk. It is denoted as $\text{TVaR}$ and defined by
\[
\text{TVaR}\left[ X;p\right] =\dfrac{1}{1-p}\int_p^1 \text{VaR}[X;u]du, \text{ for all } p\in[0,1).
\]
It can be interpreted as the average claim an insurance company will suffer in case of (extreme) situations where claims exceed the predefined confidence level $p\in(0,1)$. It is well known that the Tail Value at Risk is a coherent risk measure. For continuous random variables, it holds that
\[
\mathrm{TVaR}\left[ X;p\right] =\mathrm{E}[X\mid X>\text{VaR}[X;p]],
\]
which is also known as the conditional tail expectation or expected shortfall of $X$. It is important to note that the Basel Committee on Banking Supervision in January 2016 wishes to carry out a key reform consisting of:  
\begin{quote}
``a shift from Value-at-Risk (VaR) to an Expected Shortfall (ES) measure of risk under stress. The use of ES will help to ensure a more prudent capture of ``tail risk'' and capital adequacy during periods of significant financial market stress''.
\end{quote}
See Embrechts \textit{et al}. (2014) for further information about this discussion.

The Value at Risk and the Tail Value at Risk are  used in the theory of stochastic orders to provide stochastic comparisons between random risks. For example, if $Y$ is another random risk with quantile $\text{VaR}[Y;p]$, and it holds that
\[
\text{VaR}[X;p]\leq \text{VaR}[Y;p],\text{ for all }p\in(0,1),
\]
then $X$ is said to be smaller than $Y$ in the usual \textit{stochastic order}, denoted as $X\leq_{st}Y$. 

Another well-known stochastic comparison based on the Tail Value at Risk is the increasing convex order which holds if, and only if,
\begin{equation}\label{icxdefinition}
\int_{p}^{1}\text{VaR}[X;u]du \leq \int_{p}^{1}\text{VaR}[Y;u]du,\text{ for all }p\in(0,1).
\end{equation}

In this case, $X$ is said to be smaller than $Y$ in the \textit{increasing convex order} and it is denoted as $X\leq_{icx}Y$ (see, for example, Berrendero and C\'arcamo, 2012). Two general references for further information on stochastic orderings are Shaked and Shanthikumar (2007) and Belzunce, Martínez-Riquelme and Mulero (2016).  

However, in some situations we are not faced with the comparison of the whole random claims $X$, but with residual random claims. Next, we describe two situations where these residual claims appear. 

Let us consider a company confronted with a risky business over some time period, and let the random variable $X$ represent the claims that the company incurs at the end of the period. Suppose now that the company insures itself against heavy claims, that is, against claims above some deductible $t$. Then, the claim that the reinsurer experiences (if it does) is known as residual claims at time $t$ and is given by
\[
X_t=[X-t\mid X>t],\text{ for all }t<u_X,
\]
 where $u_X$ is the right endpoint of the support of $X$. This is called a \textit{stop-loss contract} with deductible $t$.  
 
The residual claims  also arise when a franchise deductible is incorporated to the contract. In this case, the insurer pays only the part of the claim which exceeds the amount $t$ and if the size of the claim falls below this amount, then the claim is not covered by the contract and the insured
receives no indemnification. Finally, in life insurance these conditional random variables appears as the residual life for a person who has survived up to time $t$, and are usually represented in terms of life tables. 

 If we denote the survival function of $X$ as $\ov F(t)=P[X>t]$, for all $t$ in the support of $X$, the survival function of $X_t$ is given by
\[
\ov F_{t}(x)=\frac{\ov F(x+t)}{\ov F(t)},\text{ for all }x>0,
\]
and is well defined for all $t<u_X$, even if $t$ is not in the support of $X$, and is of interest not only in insurance issues but also in other many areas of applied probability and statistics such as actuarial studies, biometry, survival analysis, economics and reliability.  A useful tool to provide properties of $X$ is the expectation of $X_t$, which is called the mean residual life function,  and is given by
\[
m(t)=E[X-t\mid X>t],
\]
for all $t<u_X$ (and 0, elsewhere) provided the expectation exists.  In insurance this value represents the expected claims from $t$, \textit{i.e.}, the expected cost of the claims from $t$ assumed for the reinsurance company.

Some stochastic orders have been defined in terms of comparisons between residual claims. On one hand, if 
\[
X_t\leq_{st}Y_t,\text{ for all }t\text{ such that }\ov F(t), \ov G(t)>0, 
\]
then $X$ is said to be smaller than $Y$ in the \textit{hazard rate order}, denoted as $X\leq_{hr}Y$.  On the other hand, if 
\begin{equation}\label{mrldef}
X_t\leq_{icx}Y_t,\text{ for all }t, 
\end{equation}
then $X$ is said to be smaller than $Y$ in the \textit{mean residual life order}, denoted as $X\leq_{mrl}Y$. In the continuous case, \eqref{mrldef} is related to the comparison of the mean residual life functions, that is, they are equivalent to  
\[
m(t)\leq l(t),\text{ for all }t, 
\]
where $l$ be the mean residual life function of a random variable $Y$ (for further information, see Shaked and Shanthikumar, 2007).

Besides the survival function and mean of residual claims, some other quantities of interest have been considered in the literature.  Given $p\in(0,1)$, an alternative to the mean residual life is the percentile residual life function which is defined by $\text{VaR}[X_t;p]$, for all $t<u_X$. For instance,  the median residual life function is obtained when $p=0.5$ (see Lillo, 2005, for a complete study of the median residual life function and related aging notions).

Franco-Pereira \textit{et al.} (2011) considered the comparison between the percentile residual life functions of two random variables and proposed the family of the \textit{percentile residual life orders}, denoted as $p\textup{-rl}$ order. In particular, for $p\in(0,1)$, given two random variables $X$ and $Y$ with percentile residual life functions $\text{VaR}[X_t;p]$ and $\text{VaR}[Y_t;p]$, respectively, then  
\begin{equation}\label{prl}
X\leq_{p\textup{-rl}}Y\Leftrightarrow \text{VaR}[X_t;p]\leq \text{VaR}[Y_t;p],\text{ for all }t.
\end{equation}

An interesting property in Franco-Pereira \textit{et al.} (2011) is the comparison between the TVaR's in their Theorem 5.1. More specifically, if $X$ and $Y$ are two random variables such that $X\leq_{p-rl}Y$,  for all $q\in[p,1)$, then 
\[
\int_q^1 \text{VaR}[X_t;u]du\leq \int_q^1 \text{VaR}[Y_t;u]du,
\]
or, equivalently, $\text{TVaR}[X_t;p]\leq \text{TVaR}[Y_t;p]$. 

As we have noticed previously, it is more interesting from an applied and a theoretical point of view, to provide comparisons in terms of the tail values at risk instead of the values at risk. Also, in contrast with the criteria provided by Franco-Pereira \textit{et al.} (2011), it is more reasonable to provide a comparison not for a fixed level $p_0$ but for $p_0$ and beyond. 

From previous considerations, in this paper we propose a new
family of stochastic orders indexed by $p_0\in(0,1)$, called in general the $p_0$-tvar-rl order, that is based on the comparison of the tail values at risk of the residual claims from $p_0$ and beyond and for all deductibles $t$. The paper is organized as follows. In Section \ref{sec:2}, we define the $p_0\text{-tvar-rl}$ order and study their main properties, relationships with other known stochastic orders through different examples and counterexamples and provide some closure results. Sufficient conditions for this order to hold are stated in  Section \ref{sec:4} and applications to some theoretical examples are given. In Section \ref{sec:5}, we illustrate the previous examples with real datasets. Finally, in Section \ref{sec:6}, we give a brief summary of the work.

From now on, we will  follow the notation in Denuit \textit{et al.} (2005). Besides, given two random variables $X$ and $Y$, we denote the cumulative  distribution functions by $F$ and $G$,  the  survival function by $\ov F$ and $\ov G$, and  the corresponding values at risk by $F^{-1}$ and $G^{-1}$. Besides, we will denote the corresponding residual lives by  $X_t=[X-t\mid X>t]$ and $Y_t=[Y-t\mid X>t]$ and their values at risk by $F^{-1}_t$ and $G^{-1}_t$, respectively.

\section{Definition and properties of the $p_0\text{-tvar-rl}$ order}\label{sec:2}

From the motivations of the previous section, we provide the definition of the following criterion to compare risks. For the rest of the paper, we assume that the considered random variables have finite means, which ensures the existence of the measures considered in the comparisons.

\begin{definition}\label{definiciontvarl}
Let $X$ and $Y$ be two random variables with right endpoint of their supports $u_X$ and $u_Y$, respectively, and $p_0\in(0,1)$. We say that $X$ is less than $Y$ in the \emph{Tail value at Risk from $p_0$ of the residual life order}, denoted by $X\leq_{p_0\textup{-tvar-rl}} Y$, if 
\[
\mathrm{TVaR}[X_t;p]\leq \mathrm{TVaR}[Y_t;p], \text{ for all } p\in[p_0,1)\text{ and }t<u_X,u_Y,
\]
or, equivalently,
\[
\int_p^1 F^{-1}_t(u)du\leq \int_p^1 G^{-1}_t(u)du, \text{ for all } p\in[p_0,1)\text{ and }t<u_X,u_Y.
\]
\end{definition}

Given a random variable $X$ with distribution function $F$ and right endpoint of its support $u_X$, it is easy to see that
\begin{equation}\label{tvaryvar}
F^{-1}_t(p)=F^{-1}(p+(1-p)F(t))-t, \text{ for all } t<u_X \text{ and }p\in(0,1).
\end{equation}

Therefore,  $X \leq_{p_0\text{-tvar-rl}}Y$ if, and only if,
\begin{equation}\label{ec1}
\int_{p}^{1}F^{-1}(u+(1-u)F(t))du \leq \int_{p}^{1}G^{-1}(u+(1-u)G(t))du,
\end{equation}
for all $t<u_X,u_Y$ and $p\in [p_0,1)$, with $p_0 \in (0,1)$.

The $p_0$-tvar-rl order satisfies some desirable closure properties that are given in the following propositions. Recall that,  given a sequence of random variables $\{X_n:n=1,2,\dots\}$ with distribution functions $F_n$ and $F$, respectively, then $X_n$ is said to converge in distribution to $X$, denoted by $X_n\overset{\text{d}}{\longrightarrow} X$, if $\lim_{n\rightarrow +\infty}F_n(x)=F(x)$ for all $x$ at which $F$ is continuous.

\begin{theorem}[Closure under convergence in distribution]
Let $\{X_n:n=1,2,\dots\}$ and $\{Y_n:n=1,2,\dots\}$ be two sequences of positive continuous random variables such that $X_n\overset{\text{d}}{\longrightarrow} X$ and $Y_n\overset{\text{d}}{\longrightarrow} Y$. Assume that $X_n$ and $X$ have a common interval support for all $n\in\mathbb N$ and $\lim_{n \rightarrow \infty}E[X_n]=E[X]$ (and, analogously, for $Y_n$ and $Y$ and their expectations). If $X_n\leq_{p_0\textup{-tvar-rl}}Y_n$, then $X\leq_{p_0\textup{-tvar-rl}}Y$.
\end{theorem}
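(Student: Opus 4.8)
The plan is to show that the defining inequality \eqref{ec1} passes to the limit. Fix $p \in [p_0,1)$ and a deductible $t < u_X, u_Y$. The hypothesis gives, for every $n$,
\[
\int_{p}^{1} F_n^{-1}\bigl(u+(1-u)F_n(t)\bigr)\,du \leq \int_{p}^{1} G_n^{-1}\bigl(u+(1-u)G_n(t)\bigr)\,du,
\]
and I want to conclude the same inequality with $F_n, G_n$ replaced by $F, G$. By symmetry it suffices to handle the left-hand side: I would prove that, along a suitable choice of $t$, the left-hand integrals converge to $\int_p^1 F^{-1}(u+(1-u)F(t))\,du$, and likewise on the right.

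First I would recall the standard fact that convergence in distribution together with convergence of means, for random variables on a common interval support, is equivalent to convergence in the Wasserstein-$1$ (equivalently $L^1$) sense: $\int_0^1 |F_n^{-1}(u) - F^{-1}(u)|\,du \to 0$. In particular $F_n^{-1}(u) \to F^{-1}(u)$ at every continuity point $u$ of $F^{-1}$ (a co-countable set), and the family $\{F_n^{-1}\}$ is uniformly integrable on $(0,1)$. The second ingredient is that $F_n(t) \to F(t)$ at every continuity point $t$ of $F$; since $F$ is continuous (the $X_n, X$ are continuous), this holds for \emph{all} $t$, and similarly $G_n(t)\to G(t)$ for all $t$. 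Because $t < u_X$, we have $F(t) < 1$, so for large $n$ the maps $u \mapsto u + (1-u)F_n(t)$ push $[p,1)$ into a fixed compact subinterval of $(0,1)$ bounded away from $1$, keeping everything inside the region where the quantile functions are finite.

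The core step is then a change-of-variables plus dominated/uniform-integrability argument. Writing $c_n = F_n(t)$ and $c = F(t)$, substitute $v = u + (1-u)c_n$, i.e. $u = (v-c_n)/(1-c_n)$, to turn $\int_p^1 F_n^{-1}(u+(1-u)c_n)\,du$ into $\frac{1}{1-c_n}\int_{p+(1-p)c_n}^{1} F_n^{-1}(v)\,dv$. Now $\frac{1}{1-c_n}\to\frac{1}{1-c}$, the lower limit $p+(1-p)c_n \to p+(1-p)c$, and $\int_a^1 F_n^{-1}(v)\,dv \to \int_a^1 F^{-1}(v)\,dv$ uniformly in $a$ by the $L^1$-convergence of the quantile functions; combining these (the integral as a function of its lower limit is Lipschitz with constant controlled by the tail of $F^{-1}$, hence the moving endpoint contributes a vanishing error) yields
\[
\int_{p}^{1} F_n^{-1}\bigl(u+(1-u)F_n(t)\bigr)\,du \;\longrightarrow\; \frac{1}{1-c}\int_{p+(1-p)c}^{1} F^{-1}(v)\,dv \;=\; \int_{p}^{1} F^{-1}\bigl(u+(1-u)F(t)\bigr)\,du,
\]
and identically for $G$. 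Passing to the limit in the inequality for each fixed $p \in [p_0,1)$ and $t < u_X, u_Y$ gives \eqref{ec1} for $X$ and $Y$, hence $X \leq_{p_0\textup{-tvar-rl}} Y$.

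The main obstacle is the double source of perturbation: both the integrand $F_n^{-1}$ and the argument $u+(1-u)F_n(t)$ (through the moving constant $c_n$) change with $n$, so a naive pointwise-convergence-plus-dominated-convergence argument on the original integral does not immediately apply. The clean way around it is exactly the substitution above, which decouples the two effects — after the change of variables the $n$-dependence in the integrand and in the limits of integration is handled separately, the former by $L^1$-convergence of quantile functions (which is where the hypothesis $\lim_n E[X_n] = E[X]$ together with the common-support assumption is essential, ruling out escape of mass) and the latter by continuity of $F$ ensuring $c_n \to c$. One should also note in passing that it is enough to verify \eqref{ec1} for $t$ strictly less than $\min(u_X,u_Y)$, and that $u_{X_n} \to u_X$ is not needed because the common-support hypothesis pins the support down.
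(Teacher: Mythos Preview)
Your argument is correct and takes a genuinely different route from the paper's. The paper does \emph{not} perform the substitution $v=u+(1-u)F_n(t)$; instead it writes
\[
\int_p^1 F_{n,t}^{-1}(u)\,du=\int_{F_{n,t}^{-1}(p)}^{+\infty}\frac{\overline F_n(t+y)}{\overline F_n(t)}\,dy+(1-p)F_{n,t}^{-1}(p),
\]
and then passes to the limit term by term, invoking M\"uller (1996) for convergence of the stop-loss transforms $\int_x^{\infty}\overline F_n$ (this is where the mean convergence hypothesis enters in the paper) and the quantile-convergence results from van der Vaart (1998) and Franco-Pereira \textit{et al.}\ (2011) for $F_{n,t}^{-1}(p)\to F_t^{-1}(p)$. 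Your change of variables collapses everything to $\frac{1}{1-c_n}\int_{p+(1-p)c_n}^1 F_n^{-1}(v)\,dv$ and reduces the whole limit to the single fact that $F_n^{-1}\to F^{-1}$ in $L^1(0,1)$ (the Wasserstein-1 equivalence), together with $c_n\to c$. This is arguably more self-contained: it avoids three separate external citations and makes the role of the mean-convergence hypothesis transparent as exactly what upgrades weak convergence to $L^1$ quantile convergence. The paper's decomposition, on the other hand, keeps the residual-life structure visible throughout and leans on ready-made lemmas rather than reproving the Wasserstein equivalence.

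Two small points worth tightening. First, your parenthetical that $a\mapsto\int_a^1 F^{-1}$ is ``Lipschitz with constant controlled by the tail of $F^{-1}$'' is not quite right, since $F^{-1}$ may be unbounded near $1$; what you actually need (and have) is absolute continuity of the Lebesgue integral of $F^{-1}\in L^1(0,1)$, so that $\int_{a_n}^a F^{-1}\to 0$ when $a_n\to a$. Second, you assert $F_n(t)\to F(t)$ for all $t$ ``since $F$ is continuous (the $X_n,X$ are continuous)'', but the statement only declares the $X_n$ continuous; continuity of $X$ is being used implicitly (the paper's proof glosses over the same point). Neither issue affects the validity of your strategy.
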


\begin{proof}
Given that $X_n\overset{\emph{d}}{\longrightarrow} X$ and $\lim_{n \rightarrow \infty}E[X_n]= E[X]$, we have that
\begin{equation}\label{convergence1}
\lim_{n\rightarrow+\infty}\int_{x}^{+\infty}\overline F_n(y)dy=\int_{x}^{+\infty}\overline F(y)dy,\text{ for all }x,
\end{equation}
where $F_n$ is the distribution function of $X_n$, for all $n\in\mathbb N$ (see M\"uller, 1996). The same reasoning holds also for $Y_n$ and $Y$.

Moreover, from $X_n\overset{\emph{d}}{\longrightarrow} X$, we have that
\begin{equation}
\lim_{n\rightarrow+\infty}F^{-1}_n(x)=  F^{-1}(x),\text{ for all }x\in(0,1),\label{convergence2}
\end{equation}
and
\begin{equation}
\lim_{n\rightarrow+\infty}F^{-1}_{n,t}(x)=  F^{-1}_t(x),\text{ for all }x\in(0,1),\label{convergence3}
\end{equation}
where $F_{n,t}$ ($G_{n,t}$) is the distribution function of $X_{n,t}=[X_n-t\mid X_n>t]$ ($Y_{n,t}=[Y_n-t\mid Y_n>t]$), for all $n\in\mathbb N$ (see van der Vaart, 1998, and Lemma 4.2 in Franco-Pereira et al., 2011, respectively). 

From \eqref{convergence1}, \eqref{convergence2}, \eqref{convergence3} and
\[
\int_{p}^{1}F^{-1}_{n,t}(u)du=\int_{F^{-1}_{n,t}(p)}^{+\infty}\frac{\overline F_{n}(t+y)}{\overline F_{n}(t)}dy+(1-p)F^{-1}_{n,t}(p),
\]
it follows that
\[
\lim_{n\rightarrow+\infty}\int_{p}^{1} F^{-1}_{n,t}(u)du=\int_{p}^{1} F^{-1}_{t}(u)du.
\]

Therefore, if $X_n\leq_{p_0\textup{-tvar-rl}}Y_n$, 
\begin{equation*}
\int_{p}^{1}F^{-1}_{t}(u)du = \lim_{n \rightarrow \infty}\int_{p}^{1}F^{-1}_{n,t}(u)du 
\leq  \lim_{n \rightarrow \infty}\int_{p}^{1}G^{-1}_{n,t}(u)du=\int_{p}^{1}G^{-1}_{t}(u)du,
\end{equation*}
for all $p\geq p_0$.
\end{proof}

Next, we give the closure under transformations. Recall the following lemma from Barlow and Proschan (1975).
\begin{lemma}[Barlow and Proschan, 1975, p. 120]\label{lemmaBP}
Let $W$ be a measure on the interval $(a,b)$, not necessarily nonnegative. Let $h$ be a nonnegative function defined on $(a,b)$.  If $\int_t^b dW(x)\geq 0$ for all $t\in (a,b)$ and if $h$ is increasing, then $\int_a^b h(x)dW(x)\geq 0$.
\end{lemma}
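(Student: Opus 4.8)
The plan is to prove the lemma by an integration-by-parts argument (equivalently, by Fubini's theorem), exploiting the fact that an increasing function is the distribution function of a nonnegative measure. Set $\overline W(x)=\int_x^b dW(s)$; by hypothesis $\overline W(x)\ge 0$ for every $x\in(a,b)$, while $\overline W(b)=0$ and $\lim_{t\downarrow a}\overline W(t)=\int_a^b dW\ge 0$. Since $h$ is increasing it induces a nonnegative Lebesgue--Stieltjes measure $dh$ on $(a,b)$, and since $h$ is nonnegative we have $h(a^+):=\lim_{x\downarrow a}h(x)\ge 0$.

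First I would write, for $x\in(a,b)$,
\[
h(x)=h(a^+)+\int_{(a,x]}dh(s),
\]
and substitute this into the target integral, splitting it as
\[
\int_a^b h(x)\,dW(x)=h(a^+)\int_a^b dW(x)+\int_a^b\!\!\int_{(a,x]}dh(s)\,dW(x).
\]
The first summand equals $h(a^+)\bigl(\int_a^b dW\bigr)\ge 0$, being a product of two nonnegative quantities. For the second summand I would interchange the order of integration, turning it into $\int_{(a,b)}\overline W(s)\,dh(s)$; this is nonnegative because the integrand $\overline W(s)$ is nonnegative on $(a,b)$ and $dh$ is a nonnegative measure. Adding the two nonnegative contributions gives $\int_a^b h(x)\,dW(x)\ge 0$, as required.

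The step requiring the most care --- and the one I expect to be the main obstacle --- is the interchange of the order of integration, since $W$ is only a signed measure. I would handle it via the Jordan decomposition $W=W^+-W^-$: under the standing finiteness assumptions each of $\int h\,dW^{\pm}$ is finite, so Fubini's theorem applies to the two pieces separately and the rearrangement is legitimate. A secondary subtlety is the treatment of the open endpoint $a$: one interprets $\int_a^b dW$ as $\lim_{t\downarrow a}\overline W(t)$ and checks (for instance by truncating $h$ on compact subintervals and invoking monotone convergence) that no mass escapes at the boundary and that $h(a^+)\int_a^b dW$ is the correct limiting value. Away from these measure-theoretic justifications the argument is pure bookkeeping.
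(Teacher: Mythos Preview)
Your proof is correct and follows the standard integration-by-parts (equivalently, Fubini) argument for this classical lemma. Note, however, that the paper does not supply its own proof of this statement: it is quoted verbatim from Barlow and Proschan (1975, p.~120) and used as an auxiliary tool in the proof of Theorem~\ref{closuretrans}, so there is no in-paper proof to compare against. Your argument is precisely the one found in the original source (and in most textbook treatments): represent the increasing nonnegative function $h$ as $h(a^+)+\int_{(a,x]}dh$, substitute, and swap the order of integration to obtain $h(a^+)\int_a^b dW+\int_{(a,b)}\overline W(s)\,dh(s)$, each term being nonnegative by hypothesis. The care you take with the Jordan decomposition of $W$ to justify Fubini and with the behaviour at the open endpoint $a$ is appropriate and suffices.
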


\begin{theorem}[Closure under transformations]\label{closuretrans}
Let $X$ and $Y$ be two random variables and $\phi$ be an increasing and convex function. Then, $X\leq_{p_0\textup{-tvar-rl}}Y$ if, and only if, $\phi(X)\leq_{p_0\textup{-tvar-rl}}\phi(Y)$.
\end{theorem}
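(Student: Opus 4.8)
The plan is to reduce $X\le_{p_0\textup{-tvar-rl}}Y$ to its quantile form \eqref{ec1}, write the analogous statement for $\phi(X)$ and $\phi(Y)$, and then transfer between the two inequalities with Lemma~\ref{lemmaBP}.

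First I would rewrite the order for the transformed pair. Since $\phi$ is increasing, and continuous (being convex), the quantile function of $\phi(X)$ is $\phi\circ F^{-1}$ and the distribution function of $\phi(X)$ at $t$ is $F(\phi^{-1}(t))$; plugging this into \eqref{tvaryvar} gives the quantile of $(\phi(X))_t$ at $u$ as $\phi\!\left(F^{-1}(u+(1-u)F(s))\right)-\phi(s)$ with $s=\phi^{-1}(t)$. Integrating over $u\in(p,1)$, the additive term $(1-p)\phi(s)$ appears unchanged on both sides (the value $s=\phi^{-1}(t)$ does not depend on which of $X,Y$ we look at), so it cancels, and $\phi(X)\le_{p_0\textup{-tvar-rl}}\phi(Y)$ is equivalent to
\[
\int_{p}^{1}\phi\!\left(F^{-1}(u+(1-u)F(t))\right)du \ \le\ \int_{p}^{1}\phi\!\left(G^{-1}(u+(1-u)G(t))\right)du
\]
for all admissible $t$ and all $p\in[p_0,1)$ — that is, \eqref{ec1} with the two integrands composed with $\phi$. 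Abbreviate $a_t(u)=F^{-1}(u+(1-u)F(t))$ and $b_t(u)=G^{-1}(u+(1-u)G(t))$, both increasing in $u$.

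For the implication $X\le_{p_0\textup{-tvar-rl}}Y\Rightarrow\phi(X)\le_{p_0\textup{-tvar-rl}}\phi(Y)$, fix $t$ and $p\ge p_0$. Convexity of $\phi$ gives the subgradient bound
\[
\phi(b_t(u))-\phi(a_t(u))\ \ge\ \phi'_{+}(a_t(u))\,\bigl(b_t(u)-a_t(u)\bigr),
\]
where $\phi'_{+}$ denotes the right derivative, which is nonnegative ($\phi$ increasing) and increasing ($\phi$ convex). Integrating over $(p,1)$ and applying Lemma~\ref{lemmaBP} with $h(u)=\phi'_{+}(a_t(u))$ — nonnegative and increasing because $\phi'_{+}$ is increasing and $a_t$ is increasing — and with the signed measure $dW(u)=\bigl(b_t(u)-a_t(u)\bigr)\,du$, whose right tails $\int_{q}^{1}dW\ge0$ for every $q\in(p,1)$ are precisely \eqref{ec1} at the levels $q\ge p_0$, one gets $\int_{p}^{1}\phi'_{+}(a_t(u))\bigl(b_t(u)-a_t(u)\bigr)\,du\ge0$ and hence $\int_{p}^{1}\bigl[\phi(b_t(u))-\phi(a_t(u))\bigr]\,du\ge0$, which is the desired inequality.

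For the reverse implication I would try to run the same argument backwards, now from the complementary convexity bound $\phi(b_t(u))-\phi(a_t(u))\le\phi'_{+}(b_t(u))\bigl(b_t(u)-a_t(u)\bigr)$: the hypothesis $\phi(X)\le_{p_0\textup{-tvar-rl}}\phi(Y)$ then forces $\int_{q}^{1}\phi'_{+}(b_t(u))\bigl(b_t(u)-a_t(u)\bigr)\,du\ge0$ for all $q\ge p_0$, and it remains to strip the positive increasing weight $\phi'_{+}(b_t(u))$ to obtain $\int_{p}^{1}\bigl(b_t(u)-a_t(u)\bigr)\,du\ge0$. This step is the main obstacle: dividing by $\phi'_{+}(b_t(u))$ produces a \emph{decreasing} weight, so Lemma~\ref{lemmaBP} no longer applies in that form. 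I expect closing it to require $\phi$ strictly increasing (so that $\phi^{-1}$ genuinely inverts $\phi$), together with an extra device — a natural candidate being a limiting argument in which the deductible $t$ is pushed toward an endpoint of the relevant support, where $\phi'_{+}(b_t(u))$ becomes asymptotically constant in $u$ and the offending weight washes out. The forward implication, by contrast, is essentially immediate from Lemma~\ref{lemmaBP}.
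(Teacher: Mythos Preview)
Your forward implication is essentially the paper's argument. The paper writes $\phi(b)-\phi(a)=\int_a^b l(v)\,dv$ with $l$ positive and increasing (i.e.\ $l=\phi'_+$ a.e.), bounds the inner integral below by $l(a_t(u))\bigl(b_t(u)-a_t(u)\bigr)$ in both sign cases, and then applies Lemma~\ref{lemmaBP} with exactly your $dW(u)=\bigl(b_t(u)-a_t(u)\bigr)\,du$ and $h(u)=l(a_t(u))\,I(p<u)$. Your subgradient inequality $\phi(b)-\phi(a)\ge\phi'_+(a)(b-a)$ compresses the two sign cases into one line but is otherwise the same device.

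On the converse: the paper's proof simply does not address it --- only the direction $X\le_{p_0\textup{-tvar-rl}}Y\Rightarrow\phi(X)\le_{p_0\textup{-tvar-rl}}\phi(Y)$ is argued, despite the ``if and only if'' in the statement. So your difficulty is real, and your instinct that something extra is needed (in particular strict monotonicity of $\phi$, without which the converse is plainly false: take $\phi$ constant) is correct. However, the specific device you propose --- pushing $t$ to an endpoint so that $\phi'_+(b_t(u))$ becomes asymptotically constant --- cannot close the gap: the order requires $\int_p^1\bigl(b_t(u)-a_t(u)\bigr)\,du\ge0$ for \emph{every} admissible $t$, and establishing it only along a limiting sequence of $t$'s says nothing about the remaining ones. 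Nor does the upper bound $\phi(b)-\phi(a)\le\phi'_+(b)(b-a)$ help, since (as you note) stripping a positive increasing weight from a nonnegative tail integral is exactly the wrong direction for Lemma~\ref{lemmaBP}. In short, your forward proof matches the paper; the reverse direction is a genuine gap both in your proposal and in the paper's own proof.
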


\begin{proof}
For a random variable $X$ with distribution function $F$ and a strictly nondecreasing and continuous function $\phi$, let $\phi(X)_t$ be the residual life of $\phi(X)$, for $t<u_{\phi(X)}$. Then, the Value at Risk of $\phi(X)_t$, denoted as $F_{\phi,t}^{-1}$, is given by
\[
F_{\phi,t}^{-1}(u)=\phi(F^{-1}(u+(1-u)F(\phi^{-1}(t))))-t.
\]
Let $Y$ be another random variable with distribution function $G$ such that $X\leq_{p_0\textup{-tvar-rl}}Y$ for a certain $p_0\in(0,1)$. From Definition \ref{definiciontvarl} and \eqref{ec1}, we have
\begin{equation*}
\int_{p}^{1}F^{-1}(u+(1-u)F(t))du \leq \int_{p}^{1}G^{-1}(u+(1-u)G(t))du,
\end{equation*}
for all $t<u_X,u_Y$ ($\overline F(t),\overline G(t)>0$) and $p\in [p_0,1)$. Let $t':=\phi^{-1}(t)$ such that $\overline F(t'),\overline G(t')>0$, then we want to prove that 
\begin{equation*}
\int_{p}^{1}\phi(F^{-1}(u+(1-u)F(t')))du \leq \int_{p}^{1}\phi(G^{-1}(u+(1-u)G(t')))du,
\end{equation*}
or, equivalently,
\begin{equation}\label{inequalityclosure}
\int_{p}^{1}\left[\phi(G^{-1}(u+(1-u)G(t')))-\phi(F^{-1}(u+(1-u)F(t')))\right]du \geq 0,
\end{equation}
for all $p\in [p_0,1)$.

It is known that, given an increasing and convex function $\phi$, then $\phi$ is continuous and there exists a positive and increasing function $l$ such that
\begin{equation}\label{property1closuretrans}
\phi(b)-\phi(a)=\int_a^b l(v)dv.
\end{equation}

From \eqref{property1closuretrans}, the expression in \eqref{inequalityclosure} can be written as
\begin{multline}\label{barlowproschanclosure}
\int_{p}^{1}\left[\phi(G^{-1}(u+(1-u)G(t')))-\phi(F^{-1}(u+(1-u)F(t')))\right]du\\
=\int_p^1\int_{F^{-1}(u+(1-u)F(t'))}^{G^{-1}(u+(1-u)G(t'))} l(v)dv du.
\end{multline}

First, let us note a lower bound for \eqref{barlowproschanclosure}. On one hand, let us suppose that $F^{-1}(u+(1-u)F(t'))\leq G^{-1}(u+(1-u)G(t'))$, then it holds that
\begin{multline*}
\int_{F^{-1}(u+(1-u)F(t'))}^{G^{-1}(u+(1-u)G(t'))} l(v)dv\geq \\
l(F^{-1}(u+(1-u)F(t')))\left[G^{-1}(u+(1-u)G(t'))-F^{-1}(u+(1-u)F(t'))\right].
\end{multline*}
On the other hand, if $F^{-1}(u+(1-u)F(\phi^{-1}(t)))> G^{-1}(u+(1-u)G(\phi^{-1}(t)))$, then
\begin{multline*}
\int_{F^{-1}(u+(1-u)F(t'))}^{G^{-1}(u+(1-u)G(t'))} l(v)dv=-\int_{G^{-1}(u+(1-u)G(t'))}^{F^{-1}(u+(1-u)F(t'))} l(v)dv\geq \\
l(F^{-1}(u+(1-u)F(t')))\left[G^{-1}(u+(1-u)G(t'))-F^{-1}(u+(1-u)F(t'))\right].
\end{multline*}
Therefore, we have
\begin{multline*}
\int_p^1\int_{F^{-1}(u+(1-u)F(t'))}^{G^{-1}(u+(1-u)G(t'))} l(v)dv du\geq \\ \int_p^1 l(F^{-1}(u+(1-u)F(t')))\left[G^{-1}(u+(1-u)G(t'))-F^{-1}(u+(1-u)F(t'))\right]du.
\end{multline*}

Now, let us consider
\[
dW(u)=\left[G^{-1}(u+(1-u)G(t'))-F^{-1}(u+(1-u)F(t'))\right]du,
\]
and
\[
h(u)=l(F^{-1}(u+(1-u)F(t')))I(p<u).
\]
By hypothesis, we have that
\[
\int_p^1 dW(u)\geq 0,
\]
and, given that $l$ and $I(p<u)$ are nondecreasing functions, $h$ is also nondecreasing. Finally, from Lemma \ref{lemmaBP},
\[
\int_p^1 l(u)dW(u)\geq 0.
\]
\end{proof}

Next, we give relationships between the new order and some of the previously defined stochastic orders. Some of the proofs follow easily and are ommitted. 

\begin{proposition}\label{prlyp0tvarrl2}
Let $X$ and $Y$ be two random variables such that $X \leq_{p_0\textup{-tvar-rl}}Y$ for $p_0\in(0,1)$, then  $X \leq_{q\textup{-tvar-rl}}Y$, for all $q\in[p_0,1)$.
\end{proposition}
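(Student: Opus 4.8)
The plan is to observe that this is essentially trivial from the definition, and the only ``work'' is to check that the quantifier over levels behaves correctly. Recall that $X \leq_{p_0\textup{-tvar-rl}} Y$ means, by Definition \ref{definiciontvarl}, that
\[
\int_p^1 F^{-1}_t(u)\,du \leq \int_p^1 G^{-1}_t(u)\,du, \text{ for all } p\in[p_0,1) \text{ and } t<u_X,u_Y.
\]
First I would fix an arbitrary $q\in[p_0,1)$. The claim $X \leq_{q\textup{-tvar-rl}} Y$ requires exactly the same family of inequalities but with $p$ ranging over $[q,1)$ instead of $[p_0,1)$.

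The key step is simply the set inclusion $[q,1)\subseteq[p_0,1)$, which holds precisely because $q\geq p_0$. Hence every inequality
\[
\int_p^1 F^{-1}_t(u)\,du \leq \int_p^1 G^{-1}_t(u)\,du
\]
that must be verified for the $q$-tvar-rl order (i.e. for $p\in[q,1)$ and all admissible $t$) is already one of the inequalities guaranteed by the hypothesis $X \leq_{p_0\textup{-tvar-rl}} Y$ (which supplies them for the larger range $p\in[p_0,1)$). Since $q\in[p_0,1)$ was arbitrary, we conclude $X \leq_{q\textup{-tvar-rl}} Y$ for all $q\in[p_0,1)$. Equivalently, in terms of the tail values at risk, $\mathrm{TVaR}[X_t;p]\leq\mathrm{TVaR}[Y_t;p]$ for all $p\in[q,1)$ and all $t<u_X,u_Y$.

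There is no real obstacle here; the statement records the monotonicity of the family $\{\leq_{p_0\textup{-tvar-rl}}\}$ in the index $p_0$, i.e. that a larger threshold gives a weaker (coarser) order. The only thing one should be slightly careful about is that the admissible range of the deductible $t$ (namely $t<u_X,u_Y$, or equivalently $\ov F(t),\ov G(t)>0$) does not depend on $p$ at all, so shrinking the range of $p$ leaves the $t$-quantifier untouched and the argument goes through verbatim.
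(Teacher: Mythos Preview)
Your proof is correct and matches the paper's treatment: the paper omits the proof as immediate, and your argument via the set inclusion $[q,1)\subseteq[p_0,1)$ is exactly the trivial observation that justifies this.
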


\begin{proposition}
Let $X$ and $Y$ be two random variables such that $X \leq_{p\textup{-rl}}Y$ for all $p\in[p_0,1)$, then  $X \leq_{p\textup{-tvar-rl}}Y$, for all $p\in[p_0,1)$.
\end{proposition}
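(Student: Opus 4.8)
The plan is to unwind both the hypothesis and the conclusion into statements about the percentile residual life functions $F^{-1}_t$ and $G^{-1}_t$, and then observe that the conclusion is obtained from the hypothesis simply by integrating a pointwise inequality. By \eqref{prl}, the assumption that $X\leq_{p\textup{-rl}}Y$ holds for every $p\in[p_0,1)$ is equivalent to
\[
F^{-1}_t(u)\leq G^{-1}_t(u),\qquad\text{for all }t<u_X,u_Y\text{ and all }u\in[p_0,1).
\]
On the other hand, by Definition \ref{definiciontvarl} together with Proposition \ref{prlyp0tvarrl2}, in order to obtain $X\leq_{p\textup{-tvar-rl}}Y$ for all $p\in[p_0,1)$ it is enough to establish $X\leq_{p_0\textup{-tvar-rl}}Y$, that is,
\[
\int_{q}^{1}F^{-1}_t(u)\,du\leq\int_{q}^{1}G^{-1}_t(u)\,du,\qquad\text{for all }t<u_X,u_Y\text{ and all }q\in[p_0,1),
\]
since the remaining members of the family (those indexed by $p>p_0$) then follow from Proposition \ref{prlyp0tvarrl2}.

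Next I would fix $q\in[p_0,1)$ and an admissible deductible $t<u_X,u_Y$. Since $[q,1)\subseteq[p_0,1)$, the pointwise inequality $F^{-1}_t(u)\leq G^{-1}_t(u)$ displayed above holds for every $u$ in the range of integration; integrating it over $u\in[q,1)$ — both integrals being finite because $X$ and $Y$ have finite means, hence so do $X_t$ and $Y_t$ — yields the required comparison of the tail values at risk. This proves $X\leq_{p_0\textup{-tvar-rl}}Y$, and an application of Proposition \ref{prlyp0tvarrl2} then gives $X\leq_{p\textup{-tvar-rl}}Y$ for every $p\in[p_0,1)$.

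There is essentially no obstacle here; the argument is just monotonicity of the integral. The only points deserving a word of care are that the admissible deductibles coincide for the two orders (namely those $t$ with $\overline F(t),\overline G(t)>0$, equivalently $t<u_X,u_Y$), and that the quantile functions $F^{-1}_t$ and $G^{-1}_t$ are integrable on $[q,1)$, which is guaranteed by the standing finite-mean assumption. Accordingly, I expect this proof to be short, which is consistent with the authors' remark that several of these relationships follow easily.
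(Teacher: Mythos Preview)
Your argument is correct and is precisely the direct computation the authors have in mind: the paper does not give a proof for this proposition, remarking instead that ``some of the proofs follow easily and are omitted,'' and your integration of the pointwise inequality $F^{-1}_t(u)\leq G^{-1}_t(u)$ over $[q,1)$ is exactly that easy proof. The detour through Proposition~\ref{prlyp0tvarrl2} is harmless but unnecessary, since your displayed integral inequality for all $q\in[p_0,1)$ and all admissible $t$ is already, by Definition~\ref{definiciontvarl}, the statement $X\leq_{p_0\textup{-tvar-rl}}Y$, which in turn is the strongest member of the family $\{X\leq_{p\textup{-tvar-rl}}Y:p\in[p_0,1)\}$.
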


\begin{proposition}\label{icxp0tvarrl}
Let $X$ and $Y$ be two random variables. Then, $X \leq_{mrl}Y$ if, and only if, $X \leq_{p_0\textup{-tvar-rl}}Y$ for all $p_0\in(0,1)$.
\end{proposition}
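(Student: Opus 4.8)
The plan is to prove the two implications separately, leaning on the definition of the $mrl$ order through the residual lives: $X\leq_{mrl}Y$ means $X_t\leq_{icx}Y_t$ for all $t$, which by \eqref{icxdefinition} is equivalent to
\[
\int_p^1 F^{-1}_t(u)\,du \leq \int_p^1 G^{-1}_t(u)\,du,\text{ for all }p\in(0,1)\text{ and all }t.
\]
The key observation is that this last display is literally the condition defining $X\leq_{p_0\textup{-tvar-rl}}Y$ \emph{except} that the inequality is quantified over all $p\in(0,1)$ rather than over $p\in[p_0,1)$.

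For the ``only if'' direction, suppose $X\leq_{mrl}Y$. Then the displayed inequality holds for every $p\in(0,1)$, so in particular it holds for every $p\in[p_0,1)$, for each fixed $p_0\in(0,1)$. By Definition \ref{definiciontvarl} this is exactly $X\leq_{p_0\textup{-tvar-rl}}Y$, and since $p_0$ was arbitrary we get the conclusion for all $p_0\in(0,1)$.

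For the ``if'' direction, suppose $X\leq_{p_0\textup{-tvar-rl}}Y$ for all $p_0\in(0,1)$. Fix any $p\in(0,1)$ and any admissible $t$. Choosing $p_0=p$, the hypothesis $X\leq_{p_0\textup{-tvar-rl}}Y$ gives $\int_p^1 F^{-1}_t(u)\,du\leq\int_p^1 G^{-1}_t(u)\,du$ (taking the level in Definition \ref{definiciontvarl} equal to $p_0$ itself). Since $p$ and $t$ were arbitrary, the inequality $\int_p^1 F^{-1}_t(u)\,du\leq\int_p^1 G^{-1}_t(u)\,du$ holds for all $p\in(0,1)$ and all $t$, i.e. $X_t\leq_{icx}Y_t$ for all $t$, which is $X\leq_{mrl}Y$.

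The only point requiring a little care — and the place I would state explicitly rather than gloss over — is the handling of the endpoint constraint: Definition \ref{definiciontvarl} restricts $t<u_X,u_Y$, while the $mrl$/$icx$ characterization ranges over all $t$; for $t$ outside the common support the residual-life comparison is vacuous (one side is degenerate), so the two quantifications agree. Beyond that bookkeeping, there is no real obstacle: the proposition is essentially the statement that intersecting the family $\{\leq_{p_0\textup{-tvar-rl}}:p_0\in(0,1)\}$ recovers the finest member, namely the limit as $p_0\downarrow 0$, which coincides with $\leq_{mrl}$.
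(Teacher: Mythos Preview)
Your proof is correct and follows essentially the same route as the paper's: both identify $X\leq_{mrl}Y$ with $X_t\leq_{icx}Y_t$ for all $t$, rewrite the latter via \eqref{icxdefinition} as the integral inequality $\int_p^1 F_t^{-1}(u)\,du\leq\int_p^1 G_t^{-1}(u)\,du$ for all $p\in(0,1)$, and then observe that this is exactly the conjunction of the $p_0$-tvar-rl conditions over all $p_0\in(0,1)$. If anything, your ``if'' direction is stated more carefully than the paper's, which compresses the argument into the shorthand ``$X\leq_{0\textup{-tvar-rl}}Y$ implies $X_t\leq_{icx}Y_t$'' without spelling out the choice $p_0=p$.
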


\begin{proof}
Let us recall that $X \leq_{mrl}Y$ is equivalent to $X_t\leq_{icx}Y_t$, $t<u_X,u_Y$ (see, for example, Belzunce, Mart\'inez-Riquelme and Mulero, 2016). From \eqref{icxdefinition}, if $X_t \leq_{icx}Y_t$, we have 
\[
\int_{p}^{1}F^{-1}(u+(1-u)F(t))du \leq \int_{p}^{1}G^{-1}(u+(1-u)G(t))du,\textup{ for all } p\in(0,1).
\]
The proof follows from (\ref{ec1}). On the other hand, $X \leq_{0\textup{-tvar-rl}}Y$ implies  $X_t \leq_{icx}Y_t$, for all $t<u_X,u_Y$, that is, $X\leq_{mrl}Y$. 
\end{proof}

Next, we show that the hazard rate order implies the $p_0\text{-tvar-rl}$ , for all $p_0\in(0,1)$. The proof follows easily by taking into account that $\leq_{hr}$ implies $\leq_{mrl}$.

\begin{proposition}\label{hr}
Let $X$ and $Y$ be two random variables such that $X\leq_{hr}Y$, then $X \leq_{p_0\text{-tvar-rl}}Y$,
for all $p_0 \in (0,1)$. 
\end{proposition}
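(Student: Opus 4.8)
The plan is to deduce Proposition \ref{hr} by composing two implications already available in the excerpt. The key observation is the standard chain of implications among stochastic orders: the hazard rate order is stronger than the mean residual life order, i.e., $X\leq_{hr}Y$ implies $X\leq_{mrl}Y$. This is a classical fact (see, e.g., Shaked and Shanthikumar, 2007, or Belzunce, Mart\'inez-Riquelme and Mulero, 2016) and may be invoked directly. Then Proposition \ref{icxp0tvarrl} gives the equivalence $X\leq_{mrl}Y \Leftrightarrow X\leq_{p_0\textup{-tvar-rl}}Y$ for all $p_0\in(0,1)$, so the conclusion follows immediately.

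Concretely, I would write: since $X\leq_{hr}Y$, we have $X\leq_{mrl}Y$; hence, by Proposition \ref{icxp0tvarrl}, $X\leq_{p_0\textup{-tvar-rl}}Y$ for all $p_0\in(0,1)$. That is the whole argument. The paper in fact says as much in the sentence preceding the statement (``The proof follows easily by taking into account that $\leq_{hr}$ implies $\leq_{mrl}$''), so at most one would add a citation for the $hr\Rightarrow mrl$ implication and perhaps a one-line reminder that $\leq_{hr}$ is defined via $X_t\leq_{st}Y_t$ while $\leq_{mrl}$ is defined via $X_t\leq_{icx}Y_t$, and $\leq_{st}$ implies $\leq_{icx}$ (since $\leq_{icx}$ is implied by $\leq_{st}$ for random variables with finite means), applied to each residual pair $X_t,Y_t$.

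There is essentially no obstacle here; the only thing to be careful about is the range of $t$ over which the residual comparisons hold. The hazard rate order compares $X_t$ and $Y_t$ for all $t$ with $\overline F(t),\overline G(t)>0$, which is exactly the range $t<u_X,u_Y$ needed in Definition \ref{definiciontvarl}, so the quantifiers match up and no extra care is required. Thus the proof is a two-step citation argument and nothing more needs to be ground through.

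\begin{proof}
Since $X\leq_{hr}Y$, it follows that $X\leq_{mrl}Y$ (see, for example, Shaked and Shanthikumar, 2007, or Belzunce, Mart\'inez-Riquelme and Mulero, 2016); indeed, for every $t<u_X,u_Y$ we have $X_t\leq_{st}Y_t$, hence $X_t\leq_{icx}Y_t$, which is the definition of $\leq_{mrl}$. The conclusion now follows from Proposition \ref{icxp0tvarrl}, which states that $X\leq_{mrl}Y$ if, and only if, $X\leq_{p_0\textup{-tvar-rl}}Y$ for all $p_0\in(0,1)$.
\end{proof}
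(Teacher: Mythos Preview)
Your proposal is correct and follows exactly the approach indicated by the paper, which notes that ``the proof follows easily by taking into account that $\leq_{hr}$ implies $\leq_{mrl}$'' and then relies on Proposition~\ref{icxp0tvarrl}. Your added remark that the range of $t$ in the hazard rate definition matches the range in Definition~\ref{definiciontvarl} is a nice sanity check but not strictly needed.
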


Now it is natural to wonder if $X \leq_{p_0\text{-tvar-rl}}Y$, for $p_0\in(0,1)$, implies $X \leq_{hr}Y$. Unfortunately, the answer is negative as we can see in the following counterexample which is based on Remark 3.7 and Counterexample A.1 in Franco-Pereira \textit{et al.} (2011). 

\begin{counterexample}\label{contraejemplo1}
 Let us fix $p_0 \in (0,1)$ and consider $X(p_0)$ with distribution function given by
\[
F_{p_0}(t)=
\begin{cases}
0, &  t<p_0,\\
t, & p_0 \leq t<1,\\
1, & t\geq 1,\\
\end{cases}
\]
that is, $X(p_0)$ is a mixture of a uniform distribution on $(p_0,1)$ with probability $1-p_0$, and a degenerate random variable at $p_0$ with probability $p_0$. Let us denote as $F^{-1}_{p_0,t}$ the corresponding Value at Risk of $X(p_0)_t$, which is the residual life of $X(p_0)$. Now, let $Y$ be a uniformly distributed on $(0,1)$ random variable and let $G^{-1}_t$ be the Value at Risk of $Y_t$. For any $p \in [p_0,1)$, it can be seen that
\[
F^{-1}_{p_0,t}(p)=
\begin{cases}
p-t, &  t<p_0,\\
p(1-t), & p_0 \leq t<1,\\
0, & t\geq 1,\\
\end{cases}
\]
and
\[
G^{-1}_t(p)=
\begin{cases}
p-t, &  t<0,\\
p(1-t), & 0 \leq t<1,\\
0, & t\geq 1.\\
\end{cases}
\]

It can be seen that $X(p_0) \leq_{p\textup{-rl}}Y$, for all $p \in [p_0,1)$, which implies $X(p_0) \leq_{p_0\text{-tvar-rl}}Y$. However, $X(p_0) \nleq_{hr}Y$. Note also that $E[X(p_0)]=\frac{p_0^2+1}{2}>E[Y]=\frac{1}{2}$, then the $p_0\textup{-tvar-rl}$ order does not preserve expectations. 
\end{counterexample}

In light of the previous counterexample, any stochastic order that preserves expectations cannot be implied  by the $p_0\textup{-tvar-rl}$ order. In particular, the $p_0\textup{-tvar-rl}$ order does not imply the stochastic, the increasing convex and the mean residual life orders. 

To finish with the study of the possible relations between this new criteria and other orders, we show that the usual stochastic order does not imply the $p_0\textup{-tvar-rl}$ order, as we can see in the following counterexample. 

\begin{counterexample}\label{contraejemplo2}
Under the assumptions of Counterexample \ref{contraejemplo1}, let $F_{p_0}$ and $G$ be the distribution functions of $X(p_0)$ and $Y$, respectively. It is easy to see that $F_{p_0}(t)\leq G(t)$ for all $t$, \textit{i.e.}, $Y \leq_{st}X(p_0)$. However, $Y \nleq_{p_0\textup{-tvar-rl}}X(p_0)$. In fact, if $t \in (0,p_0)$, we have that
\begin{equation*}\label{contraej2}
\int_{p}^{1}F^{-1}_{p_0,t}(u)du < \int_{p}^{1}G^{-1}_{t}(u)du.
\end{equation*}
In fact, the previous condition holds if, and only if,
\begin{eqnarray*}
\int_{p}^{1}(u-t)du < \int_{p}^{1}u(1-t)du &\Leftrightarrow& \int_{p}^{1} (ut-t)du=-2t(p-1)^2<0,
\end{eqnarray*}
which is always true.

In Counterexample A.2 in Franco-Pereira \textit{et al.} (2011) it is shown that, for any $p \in (0,1)$, the mean residual life order does not imply the $p\textup{-rl}$ order. Since $X\nleq_{p\textup{-rl}} Y$, we have that $X\nleq_{p_0\textup{-tvar-rl}}Y$ for any $p_0 \leq p$. 
\end{counterexample}

Finally, we give the following relationship among the new order and the icx order of certain random variables.

\begin{proposition}\label{icxsufficienteconditions}
Let $X$ and $Y$ be two random variables with finite means and right endpoint of their supports $u_X$ and $u_Y$, respectively. If $\max\left\{X_t,F_t^{-1}(p_0)\right\}\leq_{icx} \max\left\{Y_t,G_t^{-1}(p_0)\right\}$, for all $t<u_X,u_Y$, then $X\leq_{p_0\textup{-tvar-rl}}Y$.
\end{proposition}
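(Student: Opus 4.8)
The plan is to reduce the asserted inequality to the standard quantile characterization \eqref{icxdefinition} of the increasing convex order, applied to the two truncated residual lives. Throughout, fix $t<u_X,u_Y$ and abbreviate $c:=F_t^{-1}(p_0)$, $d:=G_t^{-1}(p_0)$, $\widetilde X_t:=\max\{X_t,c\}$ and $\widetilde Y_t:=\max\{Y_t,d\}$. Since $X$ has finite mean, so does $X_t=[X-t\mid X>t]$, and hence so does $\widetilde X_t$ (it is dominated by $|X_t|+|c|$); the same holds for $\widetilde Y_t$. Thus the hypothesis $\widetilde X_t\leq_{icx}\widetilde Y_t$ is meaningful and, by \eqref{icxdefinition}, equivalent to
\[
\int_p^1 \widetilde X_t^{-1}(u)\,du\leq\int_p^1 \widetilde Y_t^{-1}(u)\,du,\qquad\text{for all }p\in(0,1).
\]

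The first step is to identify the quantile function of $\widetilde X_t$. Denoting by $F_t$ the distribution function of $X_t$, the distribution function of $\widetilde X_t$ vanishes on $(-\infty,c)$ and coincides with $F_t$ on $[c,+\infty)$, so that
\[
\widetilde X_t^{-1}(u)=\inf\{z\geq c:\ F_t(z)\geq u\},\qquad u\in(0,1).
\]
Since $F_t(c)=F_t(F_t^{-1}(p_0))\geq p_0$, one reads off that $\widetilde X_t^{-1}(u)=c$ whenever $u\leq F_t(c)$, while $\widetilde X_t^{-1}(u)=F_t^{-1}(u)$ whenever $u>F_t(c)$; moreover $F_t^{-1}(u)\leq c$ exactly when $u\leq F_t(c)$, because $F_t^{-1}$ is nondecreasing and $F_t(c)\geq u$ forces $F_t^{-1}(u)\leq c$. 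Combining these cases gives the single formula
\[
\widetilde X_t^{-1}(u)=\max\bigl\{F_t^{-1}(u),\,F_t^{-1}(p_0)\bigr\},\qquad u\in(0,1),
\]
and in particular $\widetilde X_t^{-1}(u)=F_t^{-1}(u)$ for every $u\in[p_0,1)$, since there $F_t^{-1}(u)\geq F_t^{-1}(p_0)$. The analogous identities hold for $\widetilde Y_t$ with $G$ in place of $F$.

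Putting the two steps together, for every $p\in[p_0,1)$ the displayed $\leq_{icx}$ inequality reads
\[
\int_p^1 F_t^{-1}(u)\,du\leq\int_p^1 G_t^{-1}(u)\,du,
\]
and, as $t<u_X,u_Y$ was arbitrary, Definition \ref{definiciontvarl} yields $X\leq_{p_0\textup{-tvar-rl}}Y$.

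The only step requiring care is the quantile identity $\widetilde X_t^{-1}(u)=\max\{F_t^{-1}(u),F_t^{-1}(p_0)\}$ when $X_t$ has atoms: one must handle the inequality $F_t(F_t^{-1}(p_0))\geq p_0$ together with the left/right-continuity conventions for $F_t$ and its generalized inverse, in particular checking that $F_t^{-1}(u)>c$ for $u>F_t(c)$ via right-continuity of $F_t$. Everything else is a direct invocation of \eqref{icxdefinition} and Definition \ref{definiciontvarl}.
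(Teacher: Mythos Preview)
Your argument is correct and follows essentially the same route as the paper: both compute the quantile function of $\max\{X_t,F_t^{-1}(p_0)\}$, observe that it coincides with $F_t^{-1}$ on $[p_0,1)$, and then read off the required inequality from the quantile characterization \eqref{icxdefinition} of $\leq_{icx}$. The paper additionally spells out the TVaR of $\max\{X,F^{-1}(p)\}$ for $q<p$ (its condition (i)), but then discards it, so your slightly more direct version loses nothing.
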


\begin{proof}
Given a random variable $X$ with distribution function $F$, let us consider $X_p=\max\left\{X,F^{-1}(p)\right\}$, for all $p\in(0,1)$ and let $F^{-1}_p$ be its corresponding Value at Risk at $p$. Then, it is easy to see that
\begin{equation}\label{quantileXp}
F^{-1}_p(q)=\left\{\begin{array}{ll}
F^{-1}(p), &\text{ if }0<q<p,\\
F^{-1}(q), &\text{ if }p\leq q<1.
\end{array}\right.
\end{equation}

From \eqref{quantileXp}, the expectation of $X_p$ is given by
\begin{eqnarray}\label{meanXp}
E[X_p]=\int_0^1 F^{-1}_p(u)du&=&\int_0^p F^{-1}(p)du+\int_p^1 F^{-1}(u)du\nonumber\\
&=&p F^{-1}(p)+\int_p^1 F^{-1}(u)du.
\end{eqnarray}

Furthermore, the Tail Value at Risk of $X_p$ is given by
\[
\text{TVaR}[X_p;q]=\left\{\begin{array}{ll}
\frac{1}{1-q}\left(\int_p^1F^{-1}(u)du+F^{-1}(p)(p-q)\right), &\text{ if }0<q<p,\\
\frac{1}{1-q}\int_q^1 F^{-1}(u)du=\text{TVaR}[X;q], &\text{ if }p\leq q<1.
\end{array}\right.
\]

Now, let $X$ and $Y$ be two random variables with distribution function $F$ and $G$, respectively. From \eqref{icxdefinition}, we have that $X_p\leq_{icx}Y_p$, for $p\in (0,1)$, if, and only if, the two following two conditions hold
\begin{itemize}
\item[(i)] $\int_p^1F^{-1}(u)du+F^{-1}(p)(p-q)\leq \int_p^1G^{-1}(u)du+G^{-1}(p)(p-q)$, for all $0<q<p$, and
\item[(ii)] $\text{TVaR}[X;q]\leq \text{TVaR}[Y;q]$, for all $p\leq q<1$.
\end{itemize}

Therefore, we have $\text{TVaR}[X;q]\leq \text{TVaR}[Y;q]$, for all $p\leq q<1$. 
\end{proof}

\section{On sufficient conditions for the $p_0\textup{-tvar-rl}$ order}\label{sec:4}

Let $X$ and $Y$ be two random variables with distribution functions $F$ and $G$, and  right endpoint of their supports $u_X$ and $u_Y$, respectively. For $t<u_X,u_Y$, let
\begin{eqnarray*}
H_t(p)&=&G^{-1}_t(p)-F^{-1}_t(p)\\
&=& G^{-1}(p+(1-p)G(t))-F^{-1}(p+(1-p)F(t)),
\end{eqnarray*}
for all $p\in[0,1)$. Given $p_0\in (0,1)$, from Definition \ref{definiciontvarl} and \eqref{ec1}, $X\leq_{p_0\textup{-tvar-rl}} Y$ if, and only if,
\begin{equation}\label{intHt}
L_t(p)=\int_p^1 H_t(u)du\geq 0,
\end{equation}
for all $t<u_X, u_Y$ and $p\in[p_0,1)$.

It is apparent that in order to have the $p_0\textup{-tvar-rl}$ order, it is sufficient to assure that $L_t(p)\geq 0$, from a certain $p_0$ and for all $t<u_X,u_Y$.

For a fixed $t<u_X, u_Y$, we have that
\begin{eqnarray}\label{lt0}
L_t(0)&=&\mathrm{E}[Y_t]-\mathrm{E}[X_t],
\end{eqnarray}
which, obviously, can be positive  or negative. In particular, if it is positive [negative] for all $t$, then $X\leq_{mrl}[\geq_{mrl}]Y$.
Moreover, it holds that
\begin{eqnarray}\label{lt1}
\lim_{p\rightarrow 1^-} L_t(p)&=&0.
\end{eqnarray}

Next, we provide sufficient conditions for the $p_0\textup{-tvar-rl}$ order of two random variables. 
Following Karlin (1968), p. 20, let $f$ be a real function defined on $I \subseteq R$, the number of sign changes of
$f$ in $I$ is defined by 
\[
S^-(f(x)) = \sup\{S^-[f(x_1),\dots,f(x_m)]\}
\]
where $S^-[y_1,\dots,y_m]$ is the number of sign changes of the indicated sequence, zero terms
being discarded, and the supremum is extended over all sets $x_1 < x_2 < \dots< x_m\in I$, and for
all $m < +\infty$. 
Given two left continuous functions $f$ and $g$ defined on
an interval $I\subseteq R$, we will say
that the point $x_0 \in I$ is a crossing point of $f$ and $g$, if there exists $\varepsilon_1, \varepsilon_2 > 0$, such that $f (x) - g(x) > [<]0$, for all $x \in (x_0,x_0 + \varepsilon_1)$ and $f (x) - g(x) \leq [\geq]0$, for all $x \in (x_0 - \varepsilon_2,x_0)$, with strict
inequality at some point. Recall that quantile functions are left continuous.

\begin{proposition}\label{propcondsuf}
Let $X$ and $Y$ be two random variables with finite means and right endpoint of their supports $u_X$ and $u_Y$, respectively. For a fixed $t<u_X, u_Y$, let us assume that $S^-(G_t^{-1}-F_t^{-1})\geq 1$ 
where
the last sign change occurs from $-$ to + with crossing point $p_t \in (0, 1)$ (which is known as the up-crossing point). Then, $L_t(p)\geq 0$, (at least) for all $p\geq p_t$.
\end{proposition}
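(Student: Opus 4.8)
The plan is to work directly with $L_t(p)=\int_p^1 H_t(u)\,du$, where $H_t=G_t^{-1}-F_t^{-1}$, and exploit the assumed sign-change structure of $H_t$. By hypothesis $S^-(H_t)\geq 1$ and the last sign change is from $-$ to $+$ at the up-crossing point $p_t\in(0,1)$. This means that on the interval $(p_t,1)$ the function $H_t$ is nonnegative (with strict positivity somewhere), since after the last crossing point there are no further sign changes and the sign there is $+$. Therefore, for any $p\in[p_t,1)$,
\[
L_t(p)=\int_p^1 H_t(u)\,du\geq 0,
\]
because the integrand is nonnegative on $(p,1)\subseteq(p_t,1)$. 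This already settles the claim for $p\in[p_t,1)$, which is exactly what is asserted.

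To make this rigorous I would first note that quantile functions, and hence $H_t$, are left continuous, so the notion of crossing point from the paragraph preceding the proposition applies. Then I would argue that ``the last sign change occurs from $-$ to $+$ at $p_t$'' forces $H_t(u)\geq 0$ for all $u\in(p_t,1)$: if $H_t$ were strictly negative at some $u_1\in(p_t,1)$, then since $H_t$ is eventually nonnegative near $1$ (indeed $L_t(p)\to 0$ as $p\to 1^-$ together with the up-crossing description pins down the sign near $1$ as $+$), there would be a further crossing point in $(u_1,1)$, contradicting that $p_t$ is the last one. Hence the integrand in $L_t(p)$ is nonnegative on the whole range of integration whenever $p\geq p_t$, giving $L_t(p)\geq 0$.

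The main obstacle — really the only delicate point — is justifying carefully that the sign of $H_t$ on $(p_t,1)$ is genuinely $+$ rather than being allowed to dip negative between $p_t$ and $1$; this is a matter of correctly reading the definitions of $S^-$ and of ``last sign change'' from Karlin (1968) and the crossing-point definition given just above the proposition. Once one accepts that $H_t\geq 0$ on $(p_t,1)$, the conclusion is immediate and requires no computation. I would also remark (though it is not needed for the stated conclusion) that nothing here guarantees $L_t(p)\geq 0$ for $p<p_t$; whether the order actually holds from some smaller threshold depends on the behavior of $H_t$ before $p_t$ and on the sign of $L_t(0)=\mathrm{E}[Y_t]-\mathrm{E}[X_t]$, which is why the statement only claims nonnegativity ``(at least) for all $p\geq p_t$''.
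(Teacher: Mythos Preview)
Your proposal is correct and follows essentially the same route as the paper: the paper's proof is a one-liner noting that since the last sign change of $H_t$ is from $-$ to $+$ at $p_t$, one has $H_t(u)\geq 0$ on $(p_t,1)$, so $L_t(p)=\int_p^1 H_t(u)\,du\geq 0$ for $p\in[p_t,1)$. Your write-up simply spells out more carefully why no further sign change can occur beyond $p_t$, which is exactly the content the paper leaves implicit.
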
 

\begin{proof}
The proof follows easily by taking into account that $L_t(p)=\int_p^1 H_t(u)du\geq 0$, for all $p\in[p_t,1)$, if $H_t(p)\leq 0$, for all $p\in[p_t,1)$.
\end{proof}

It is worth to mention that if Proposition \ref{propcondsuf} holds, then it could exist another $p'_t<p_t$ such that $L_t(p)\geq 0$, for all $p\geq p'_t$. 

From now on, let $u=\min\{u_X,u_Y\}$ and $T=(-\infty,u)$, i.e., the set of all possible $t$'s; $T_1$, the set of all $t$'s such that $F^{-1}_t(p)\leq G^{-1}_t(p)$, for all $p\in(0,1)$; and $T_2$ the set of all $t$'s for which Proposition \ref{propcondsuf} holds, \textit{i.e.}, it exists $p_t\in(0,1)$ such that $L_t(p)\geq 0$, for all $p\geq p_t$. 

\begin{theorem}\label{teocondsuf}
Let $X$ and $Y$ be two random variables with finite means and right endpoint of their supports $u_X$ and $u_Y$, respectively. If $T=T_1\ \cup\ T_2$, then $X\leq_{p_0\textup{-tvar-rl}} Y$, for some $p_0\in(0,1)$. More specifically, $X\leq_{p_0\textup{-tvar-rl}} Y$ for
\[
p_0=\max\{p_t:t\in T_2\}.
\]
\end{theorem}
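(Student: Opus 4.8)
The plan is to reduce the claim to the characterization of the $p_0\textup{-tvar-rl}$ order given in \eqref{intHt}, namely that $X\leq_{p_0\textup{-tvar-rl}}Y$ holds if and only if $L_t(p)=\int_p^1 H_t(u)\,du\geq 0$ for every $t<u_X,u_Y$ and every $p\in[p_0,1)$. So it suffices to verify, under the hypothesis $T=T_1\cup T_2$, that with the choice $p_0=\max\{p_t:t\in T_2\}$ we indeed have $L_t(p)\geq 0$ for all $t\in T$ and all $p\geq p_0$. I would split the verification according to whether $t\in T_1$ or $t\in T_2$ (these cover all of $T$ by hypothesis, though they need not be disjoint).

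First I would treat $t\in T_1$. By definition of $T_1$ we have $F^{-1}_t(p)\leq G^{-1}_t(p)$ for all $p\in(0,1)$, hence $H_t(u)=G^{-1}_t(u)-F^{-1}_t(u)\geq 0$ for all $u$, and therefore $L_t(p)=\int_p^1 H_t(u)\,du\geq 0$ for every $p\in(0,1)$, in particular for every $p\geq p_0$. Next I would treat $t\in T_2$. By definition of $T_2$ there exists $p_t\in(0,1)$ with $L_t(p)\geq 0$ for all $p\geq p_t$; since $p_0=\max\{p_s:s\in T_2\}\geq p_t$, the interval $[p_0,1)$ is contained in $[p_t,1)$, so $L_t(p)\geq 0$ for all $p\geq p_0$. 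Combining the two cases, $L_t(p)\geq 0$ for all $t\in T$ and all $p\in[p_0,1)$, which is exactly condition \eqref{intHt}, so $X\leq_{p_0\textup{-tvar-rl}}Y$.

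One small point I would address explicitly is that the maximum $p_0=\max\{p_t:t\in T_2\}$ be well defined and lie in $(0,1)$. If $T_2=\emptyset$ then $T=T_1$ and the conclusion $X\leq_{0\textup{-tvar-rl}}Y$ (equivalently $X\leq_{mrl}Y$, by Proposition \ref{icxp0tvarrl}) follows from the $T_1$ argument above, so we may assume $T_2\neq\emptyset$; each $p_t$ is in $(0,1)$ by Proposition \ref{propcondsuf}. If the supremum of the $p_t$ is attained (a maximum), it lies in $(0,1)$ and the argument goes through verbatim; if only a supremum $p^\ast:=\sup\{p_t:t\in T_2\}$ exists, one takes $p_0$ to be any value in $(p^\ast,1)$ (or, if $p^\ast<1$, the same reasoning with $[p_0,1)\subseteq[p_t,1)$ still applies whenever $p_0\geq p^\ast$). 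The main—indeed the only—subtlety is thus this bookkeeping about whether the maximum over $T_2$ is attained; the core implication is immediate once \eqref{intHt} is invoked, and the heavy lifting has already been done in Proposition \ref{propcondsuf}.
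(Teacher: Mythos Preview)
Your proof is correct and follows essentially the same approach as the paper: split $T$ into $T_1$ and $T_2$, observe that $L_t(p)\geq 0$ for all $p$ when $t\in T_1$ and for all $p\geq p_t$ when $t\in T_2$, and conclude via \eqref{intHt}. Your treatment is in fact more careful than the paper's, which does not explicitly discuss whether the maximum over $T_2$ is attained.
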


\begin{proof}
Fixed $t<u_X, u_Y$, if $t\in T_1$, then $L_t(p)\geq 0$, for all $p\in(0,1)$. Otherwise, if $t\in T_2$ holds, then $L_t(p)\geq 0$, for all $p\in(p_t,1)$. Therefore, $L_t(p)\geq 0$ for all $p_0\in(0,1)$ where $p_0=\max\{p_t:0<t<u_X, u_Y\}$ and we have that $X\leq_{p_0\textup{-tvar-rl}} Y$.
\end{proof}

\begin{remark}\label{remark3}
 Given two random variables $X$ and $Y$ with distribution functions $F$ and $G$, respectively, Theorem \ref{teocondsuf} holds if, for each $t\in T=(-\infty,u_X)$, it holds one of the following sets of conditions:
\begin{itemize}
\item[(i)] $F^{-1}(p+(1-p)G(t))\leq G^{-1}(p+(1-p)F(t))$, for all $p\in(0,1)$ (\textit{i.e.}, $t\in T_1$), or
\item[(ii)] $S^-(G^{-1}(p+(1-p)G(t))-F^{-1}(p+(1-p)F(t)))\geq 1$ where the last sign change occurs from - to + with crossing point $p_t \in (0, 1)$ (\textit{i.e.}, $t\in T_2$).
\end{itemize} 
Therefore,  the bivariate plot of
\begin{equation}\label{bivariateplot}
\begin{array}{ccc}
H^*:(0,1)\times (-\infty,u_X)&\mapsto& \mathbb R\\
(p,t) &\rightsquigarrow & G^{-1}(p+(1-p)G(t))-F^{-1}(p+(1-p)F(t)),
\end{array}
\end{equation}
could be a feasible approach to analyze the changes of sign of $G_t^{-1}-F_t^{-1}$, for each $t$. This plot can be implemented using the \textsf{VaRES} package in \textsf{R} (\url{www.r-project.org}, R Core Team, Vienna, Austria) which provides all the quantiles and distribution functions (see Nadarajah, Chan and Afuecheta, 2017, for details).
\end{remark}

To finish, the following examples illustrate Theorem \ref{teocondsuf} for different models  of interest in risk theory such as  the Pareto, the loglogistic and the distorted distributions. 

\begin{example}[Pareto distributions]\label{pareto2}
The Pareto distribution is a well-known suitable model for many nonnegative socioeconomic variables. For example, this model is one of the most important
mathematical models for calculating excess of loss premiums in risk theory. There are several univariate versions of the original distribution proposed by Vilfredo Pareto in 1897. A random risk follows a generalized Pareto distribution, denoted by $\textup{GPD}(\xi,\mu,\sigma)$ for $\xi,\mu\in\mathbb R$ and $\sigma>0$, if its survival function is given by
\[
\ov F(x)=\left\{\begin{array}{ll}
\left(1+\xi\frac{x-\mu}{\sigma}\right)^{-1/\xi}, & \xi\neq 0,\\
\exp\left(-\frac{x-\mu}{\sigma}\right), & \xi=0, 
\end{array} \right.
\]
for all $x\geq \mu$, if $\xi\geq 0$, and $\mu\leq x\leq \mu-\sigma/\xi$, if $\xi<0$  (see, for example, Arnold, 2014 and 2015). In particular, given $\xi>0$ and $t\in\mathbb R$, it holds that
\begin{eqnarray*}
\overline F_t(x) &=& \frac{\left(1+\xi\frac{x+t-\mu}{\sigma}\right)^{-1/\xi}}{\left(1+\xi\frac{t-\mu}{\sigma}\right)^{-1/\xi}},\text{ for all }t<u_X,x\in\mathbb R,\\
F^{-1}_t(p) &=& \frac{\sigma}{\xi}\left[(1-p)^{-\xi}\left(1+\xi\frac{t-\mu}{\sigma}\right)-1\right]-t+\mu,\text{ for all }t<u_X,p\in\mathbb R.
\end{eqnarray*}

Let $X\sim\textup{GPD}(\xi,\mu_X,\sigma_X)$ and $Y\sim\textup{GPD}(\xi,\mu_Y,\sigma_Y)$ with $\xi\neq 0$. After a straightforward computation, it follows that
\begin{eqnarray*}
H_t(p) & = & G^{-1}_t(p)-F^{-1}_t(p)\\
&=& (1-p)^{-\xi}\left(\frac{\sigma_Y}{\xi}-\frac{\sigma_X}{\xi}+\mu_X-\mu_Y\right)-\left(\frac{\sigma_Y}{\xi}-\frac{\sigma_X}{\xi}+\mu_X-\mu_Y\right),
\end{eqnarray*}
for all $p\in(0,1)$ and $t<u_X,u_Y$. 

It is easy to see that
\begin{eqnarray*}
H_t(0) &=& 0,\\
H'_t(p) &=& \xi(1-p)^{-\xi-1}\left(\frac{\sigma_Y}{\xi}-\frac{\sigma_X}{\xi}+\mu_X-\mu_Y\right).
\end{eqnarray*}
Therefore, $F^{-1}_t(p)\leq G^{-1}_t(p)$, for all $p\in(0,1)$ and $t\in\mathbb R$, whenever $\xi>[<]0$ and
$\sigma_Y-\sigma_X>[<]\xi(\mu_Y-\mu_X)$,
and we have $X\leq_{0\textup{-tvar-rl}} Y$.

A particular case of the generalized Pareto distributions is the original Pareto distibution. Given a random variable $X\sim P(a,k)$, we have that
\[
\overline F(x) = \left\{\begin{array}{ll}
1,& \text{ for all }x\leq k,\\
\left(\dfrac{k}{x} \right)^{a},& \text{ for all }x>k.
\end{array}\right.
\]
Given that, in this case, $u_X=+\infty$, we need to compute $\overline F_t$, and $F^{-1}_t$, for all $t\geq 0$, and the following cases are distinguished:
\begin{itemize}
\item[(i)] If $t<k$, then
\[
\begin{array}{lll}
\overline F_t(x) & = & \left\{\begin{array}{ll}
1,& \text{ for all } x\leq k-t,\\
\left(\dfrac{k}{t+x} \right)^{a},& \text{ for all }x>k-t.
\end{array}\right.
\end{array}
\]
\[
F^{-1}_t(p) = \dfrac{k}{(1-p)^{1/a}} -t, \text{ for all }p\in(0,1).
\]

\item[(ii)] If $t\geq k$, then
\[
\begin{array}{lll}
\overline F_t(x) & = & \left\{\begin{array}{ll}
1,& \text{ for all }x\leq 0,\\
\left(\dfrac{t}{t+x} \right)^{a},& \text{ for all }x>0.
\end{array}\right.
\end{array}
\]
\[
F^{-1}_t(p)  = \dfrac{t}{(1-p)^{1/a}} -t, \text{ for all }p\in(0,1).
\]
\end{itemize}

Let $X\sim P(a_X,k_X)$ and $Y\sim P(a_Y,k_Y)$ with survival functions $\overline F$ and $\overline G$, respectively, such that $a_X,a_Y>1$ (in order to have finite means). Let us suppose that $\frac{a_X k_X}{a_X-1}>\frac{a_Y k_Y}{a_Y-1}$ (in order to $E[X]>E[Y]$).

If $t<k_X,k_Y$,
\begin{eqnarray*}
H_t(p)&=&\dfrac{k_Y}{(1-p)^{1/a_Y}}-\dfrac{k_X}{(1-p)^{1/a_X}},\text{ for all }p\in[0,1),\\
H_t(0)&=& k_Y-k_X.
\end{eqnarray*}

If we suppose that $k_X>k_Y$, then $H_t(0)<0$. Moreover, if $a_X>a_Y>1$, then
\[
\lim_{p\rightarrow 1^-}H_t(p)=+\infty.
\]
Then, $S^-\left(H_t\right)\geq 1$ where the last sign change occurs from - to + with crossing point $p_t$. Under the notation in Theorem \ref{teocondsuf}, $t\in T_2$. 

If $k_Y<t<k_X$,
\begin{eqnarray*}
H_t(p)&=&\dfrac{t}{(1-p)^{1/a_Y}}-\dfrac{k_X}{(1-p)^{1/a_X}},\text{ for all }p\in[0,1),
\end{eqnarray*}
the situation is similar to the previous one, and $t\in T_2$.

If $k_Y<k_X<t$, and we suppose $a_X>a_Y>1$, we have
\begin{eqnarray*}
H_t(p)&=&\dfrac{t}{(1-p)^{1/a_Y}}-\dfrac{t}{(1-p)^{1/a_X}}>0,
\end{eqnarray*}
for all $p\in[0,1)$. Therefore, $t\in T_1$.

To sum up, if
\begin{itemize}
\item[(i)] $\frac{a_X k_X}{a_X-1}>\frac{a_Y k_Y}{a_Y-1}$,
\item[(ii)] $a_X>a_Y>1$, and
\item[(iii)] $k_X>k_Y$,
\end{itemize}
then $T=T_1\  \cup \ T_2$ and, from Theorem \ref{teocondsuf}, $X\leq_{p_0\textup{-tvar-rl}}Y$, for some $p_0\in(0,1)$. Obviously, for large values of $a_X$ and $a_Y$ condition (i) reduces to condition (iii).

It is important to note that, in this case, it is possible to obtain explicit expressions for the exact value of $p_0$. In particular, $X\leq_{p_0\textup{-tvar-rl}}Y$ for
\begin{equation}\label{p0pareto}
p_0=1- \left[\frac{a_X(a_Y-1)k_X}{a_Y(a_X-1)k_Y}\right]^{a_X a_Y/(a_Y-a_X)}.
\end{equation}
\end{example}

\begin{example}[Loglogistic distributions]\label{logistic1}
The loglogistic distribution is widely used to fit finance and risk data as the duration of claim for income protection insurance, natural catastrophe claims and, of course, insurance claims. This distribution is closely related to the logistic one which is widely used in statistical modeling, for example when we consider extreme data (see Balakrishnan, 1992, for further details). A random variable follows a logistic distribution, denoted by $X\sim \textup{Logistic}(\mu,\sigma)$, with parameters $\mu\in \mathbb R, \sigma>0$, if its survival function is given by 
\begin{eqnarray*}
\overline F(x) &=&\dfrac{1}{1+\exp\left(\frac{x-\mu}{\sigma}\right)},\text{ for all }x\in\mathbb R.
\end{eqnarray*}

If $X\sim \textup{Logistic}(\mu,\sigma)$, it holds that
\begin{eqnarray*}
\overline F_t(x) &=& \frac{1+\exp\left(\frac{t-\mu}{\sigma}\right)}{1+\exp\left(\frac{x+t-\mu}{\sigma}\right)},\text{ for all }t\in\mathbb R,x\in\mathbb R,\\
F_t^{-1}(p) &=& \sigma\log\left[\frac{p+\exp\left(\frac{t-\mu}{\sigma}\right)}{1-p}\right]+\mu-t,\text{ for all }t\in\mathbb R,p\in(0,1).
\end{eqnarray*}

Given $X\sim \textup{Logistic}(\mu_X,\sigma_X)$ and $Y\sim \textup{Logistic}(\mu_Y,\sigma_Y)$, it follows that
\begin{eqnarray}\label{logisticht}
H_t(p)&=&G_t^{-1}(p)-F_t^{-1}(p)\nonumber\\
&=&\sigma_Y\log\left[\frac{p+\exp\left(\frac{t-\mu_Y}{\sigma_Y}\right)}{1-p}\right]
-\sigma_X\log\left[\frac{p+\exp\left(\frac{t-\mu_X}{\sigma_X}\right)}{1-p}\right]+(\mu_Y-\mu_X),
\end{eqnarray} 
for all $p\in(0,1)$ and $t\in\mathbb R$. 

Next, we consider two situations where $X$ and $Y$ share one of the parameters.
\begin{itemize}
\item[(i)] If $\sigma_X=\sigma_Y:=\sigma>0$, then
\begin{eqnarray*}
H_t(0) &=& 0,\\
H'_t(p) &=& \frac{\sigma}{p+\exp\left(\frac{t-\mu_Y}{\sigma}\right)}-\frac{\sigma}{p+\exp\left(\frac{t-\mu_X}{\sigma}\right)}.
\end{eqnarray*}
If we assume that $\mu_X<\mu_Y$, then $H_t(p)\geq 0$, for all $p\in(0,1)$ and $t\in\mathbb R$. From Proposition \ref{hr}, $F_t^{-1}(p)\leq G_t^{-1}(p)$, for all $p\in(0,1)$.

\item[(ii)] If $\mu_X=\mu_Y:=\mu\in\mathbb R$, \eqref{logisticht} can be written as
\begin{eqnarray*}
H_t(p)&=&\sigma_Y\log\left[\frac{p+\exp\left(\frac{t-\mu}{\sigma_Y}\right)}{1-p}\right]
-\sigma_X\log\left[\frac{p+\exp\left(\frac{t-\mu}{\sigma_X}\right)}{1-p}\right],
\end{eqnarray*} 
for all $p\in(0,1)$ and $t\in\mathbb R$.

Let us consider
\begin{equation}\label{logistic}
h(\sigma)=\sigma\log\left[\frac{p+\exp\left(\frac{t-\mu}{\sigma}\right)}{1-p}\right],
\end{equation}
for all $p\in(0,1)$ and $t\in\mathbb R$. Given that its derivative is given by
\[
h'(\sigma)=\log\left[\frac{p+\exp\left(\frac{t-\mu}{\sigma}\right)}{1-p}\right]-\left(\frac{t-\mu}{\sigma}\right)\frac{\exp\left(\frac{t-\mu}{\sigma}\right)}{p+\exp\left(\frac{t-\mu}{\sigma}\right)},
\]
it is easy to see that \eqref{logistic} is increasing if, and only if,
\[
\log\left[p+\exp\left(\frac{t-\mu}{\sigma}\right)\right]\geq \left(\frac{t-\mu}{\sigma}\right)\frac{\exp\left(\frac{t-\mu}{\sigma}\right)}{p+\exp\left(\frac{t-\mu}{\sigma}\right)}+\log(1-p),
\]
or, equivalently,
\begin{multline*}
p\log\left[p+\exp\left(\frac{t-\mu}{\sigma}\right)\right]+\exp\left(\frac{t-\mu}{\sigma}\right)\log\left[p+\exp\left(\frac{t-\mu}{\sigma}\right)\right]\\ \geq \left(\frac{t-\mu}{\sigma}\right)\exp\left(\frac{t-\mu}{\sigma}\right)+p\log(1-p)+\exp\left(\frac{t-\mu}{\sigma}\right)\log(1-p).
\end{multline*}

Let us fix $t\in\mathbb R$ and $p>1-\exp\left(\frac{t-\mu}{\sigma}\right)$ and let
\begin{eqnarray*}
A &:=& p\log\left[p+\exp\left(\frac{t-\mu}{\sigma}\right)\right]\\
B &:=& \exp\left(\frac{t-\mu}{\sigma}\right)\log\left[p+\exp\left(\frac{t-\mu}{\sigma}\right)\right]\\
C &:=& \left(\frac{t-\mu}{\sigma}\right)\exp\left(\frac{t-\mu}{\sigma}\right)\\
D &:=& p\log(1-p)+\exp\left(\frac{t-\mu}{\sigma}\right)\log(1-p)
\end{eqnarray*}

In this case, we have that
\begin{eqnarray*}
A&\geq& D \\
B&\geq& C 
\end{eqnarray*}
and, therefore, $h'(\sigma)\geq 0$. In particular, note that $h$ is increasing for all $p\in(0,1)$ if $t\geq \mu$ and, consequently, $H_t(p)\geq 0$, for all $p\in(0,1)$. For the rest of values of $t$ it is necessary to study the corresponding crossing points. 

Let us consider $X\sim \textup{Logistic}(0,2)$ and $Y\sim \textup{Logistic}(0,5)$ as a particular example. In this case, we use the graphical tool described in \eqref{bivariateplot} which simplifies the analysis. In Figure \ref{logisticfigure}, we can see the corresponding function $H_t(p)$ over $p\in(0,1)$ and $t\in(-20,20)$. It is easy to see that for $t>0$, $H_t(p)\geq 0$, for all $p\in (0,1)$, while for $t<0$ the function suffers a change of sign from - to + in $p_t$. 
\begin{figure}
\centering
\includegraphics[width=0.5\textwidth]{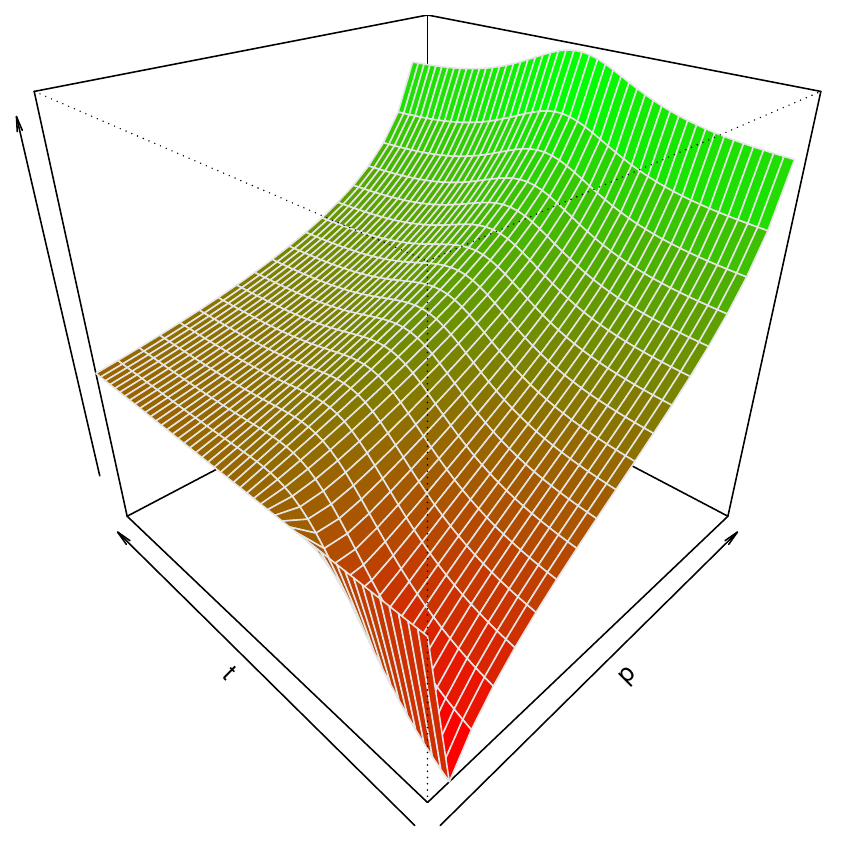}
\caption{$H_t(p)$ for $X\sim \textup{Logistic}(0,2)$ and $Y\sim \textup{Logistic}(0,5)$ with $t\in(-20,20)$ and $p\in(0,1)$.}
\label{logisticfigure}
\end{figure}

In order to analyze the $p_t$ values we can compute them numerically for different values of $t\in\mathbb R$. In Figure \ref{plogisticfigure} we can see the $p_t$ values for $t\in(-100,100)$ and we realize that this crossing point is always smaller than 1/2. In fact, $\lim_{t\rightarrow -\infty}p_t=1/2$. 
\begin{figure}
\centering
\includegraphics[width=0.5\textwidth]{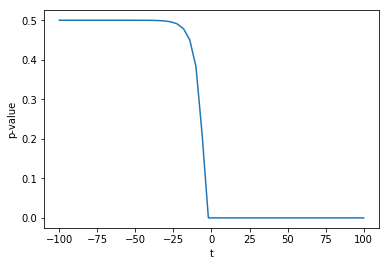}
\caption{$H_t(p)$ for $X\sim \textup{Logistic}(0,2)$ and $Y\sim \textup{Logistic}(0,5)$ with $t\in(-20,20)$ and $p\in(0,1)$.}
\label{plogisticfigure}
\end{figure}
Given that $T=T_1\ \cup\ T_2$, we can say that in this case $X\leq_{0.5\textup{-tvar-rl}}Y$.

\end{itemize}

The connection between the logistic and the loglogistic distributions is the following. If $X\sim \textup{Logistic}(\mu,\sigma)$, then 
\begin{equation*}\label{loglogistic}
X^*=\exp(X) \sim LogLogistic(e^\mu,1/s).
\end{equation*}
Therefore, if $X$ and $Y$ are two logistic distributions such that
\[
X\leq_{p_0\textup{-tvar-rl}} Y,
\]
 taking into account that the exponential function is increasing and convex and using this fact in Theorem \ref{closuretrans}, it holds that 
 \[
 X^*=\exp(X)\leq_{p_0\textup{-tvar-rl}} \exp(Y)=Y^*,
 \]
  where $X^*$ and $Y^*$ follow two loglogistic distributions.
\end{example}

\begin{example}[Distorted distributions]\label{distorted1}
Distorted distributions were introduced by Denneberg (1990) and Wang (1995, 1996) in the context of actuarial science for several variety of insurance problems. A distortion function is a  continuous, nondecreasing and piecewise
differentiable function $h : [0, 1] \rightarrow [0, 1]$ such that
$h(0) = 0$ and $h(1) = 1$. Given a random variable $X$ with finite mean and right endpoint of its support $u_X$ and a distortion function $h$, the distorted random variable $X_h$ induced by $h$ has survival function given by
\begin{equation*}
\ov F_{h}(x)=h\left(\ov F(x)\right),
\end{equation*}
for all $x$ in the support of $X$, denoted by $\textup{Supp(X)}$. 

If $X_h$ is the distorted random variable induced by $h$, it holds that 
\begin{eqnarray*}
\ov F_{h,t}(x)&=&\frac{h(\ov F(x+t))}{h(\ov F(t))},\text{ for all }t<u_X,x\in\textup{Supp(X)},\\
F^{-1}_{h,t}(p)&=&F^{-1}\left(1-h^{-1}\left[(1-p)h\left(\ov F(t)\right)\right]\right),\text{ for all }t<u_X,p\in(0,1).
\end{eqnarray*}
 
Given two distorted random variables $X_h$ and $X_l$ induced 
by $h$ and $l$, respectively, it follows that
\begin{eqnarray*}
H_t(p)&=&F^{-1}_{l,t}(p)-F^{-1}_{h,t}(p)\\
&=& F^{-1}\left(1-l^{-1}\left[(1-p)l\left(\ov F(t)\right)\right]\right)-F^{-1}\left(1-h^{-1}\left[(1-p)h\left(\ov F(t)\right)\right]\right),
\end{eqnarray*}
for all $p\in(0,1)$ and $t<u_X$.

On one hand, if $h(t)\geq l(t)$, for all $t\in(0,1)$, then $F^{-1}_{h,t}(p)\leq F^{-1}_{l,t}(p)$, for all $t<u_X$ and all $p\in(0,1)$, and $X_{h,t}\leq_{0\textup{-tvar-rl}} X_{l,t}$. For example, let $h(t)=t^\alpha$ and $l(t)=t^\beta$, for $0<\alpha<\beta$, respectively. In this case, it holds that $h(t)\geq l(t)$, for all $t\in(0,1)$ and, consequently, $X_{h}\leq_{0\textup{-tvar-rl}} X_{l}$.

A more interesting case is when $h(t)\not\geq l(t)$, for all $t\in(0,1)$. From Theorem \ref{teocondsuf}, $T=T_1\ \cup\ T_2$ if, and only if, for each $t<u_X$, one of the following conditions holds:
\begin{itemize}
\item[(i)] $F^{-1}_{h,t}(p)\leq F^{-1}_{l,t}(p)$, for all $p\in(0,1)$, or
\item[(ii)] $S^-\left(H_{t}\right)\geq 1$ where the last sign change occurs from - to + with crossing point $p_t \in (0, 1)$.
\end{itemize} 
Let $f_{h,l}^q(p)=h^{-1}((1-p)h(q))-l^{-1}((1-p)l(q))$, for $q\in(0,1)$, then (ii) is equivalent to 
\[
S^-\left(f_{h,l}^q\right)\geq 1,
\]
 where the last sign change occurs from - to + with crossing point $p_t \in (0, 1)$. Taking into account Remark \ref{remark3}, we can plot $f_{h,l}^q$ over $p\in(0,1)$ and $q\in(0,1)$ and observe the sign changes over $q\in (0,1)$. For example, in Figure \ref{distortedfigure}, we show the plot of $f_{h,l}^q(p)$ for $h(t)=t^{0.45}$ and $l(t)=1-(1-t)^{2.75}$ and we can see that there are some $q$'s for which this function is always positive (this case corresponds to $t\in T_1$) and some other $q$'s for which there is a sign change. Therefore, $T=T_1\ \cup\ T_2$.
\begin{figure}
\centering
\includegraphics[width=0.5\textwidth]{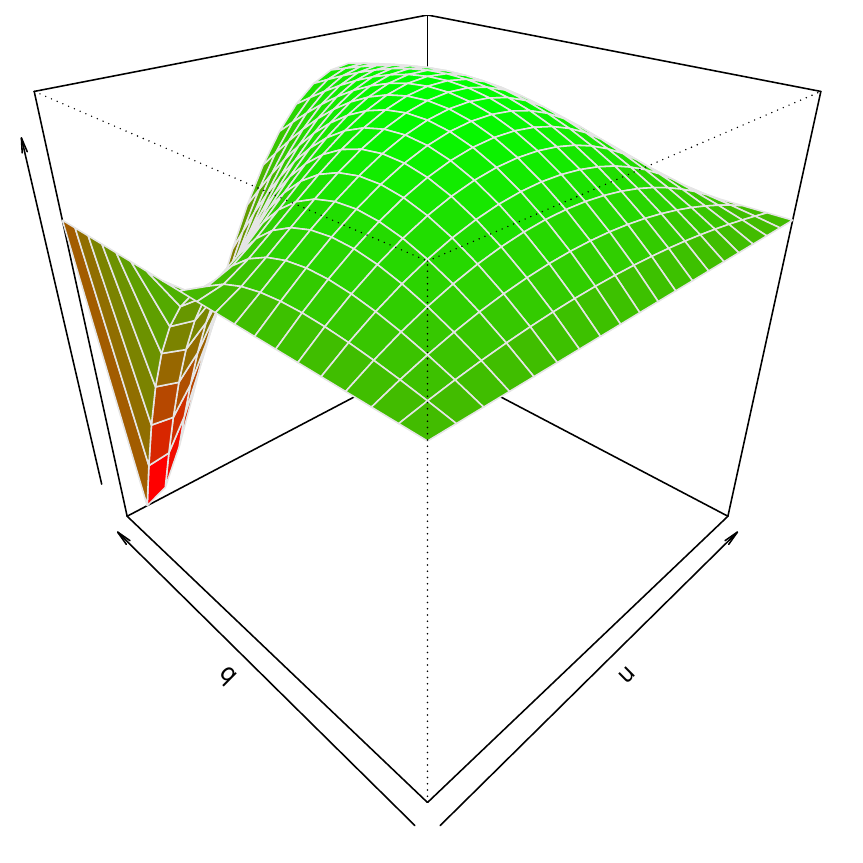}
\caption{$f_{h,l}^q(p)$ for $h(t)=t^{0.45}$ and $l(t)=1-(1-t)^{2.75}$, for all $p\in(0,1)$ and $q\in(0,1)$.}
\label{distortedfigure}
\end{figure}
\end{example}

\section{Real data example}\label{sec:5}

In this example of methodological nature we illustrate the previous ideas and results through a real insurance dataset. \textsf{AutoClaims} is a dataset included in the package \textsf{insuranceData} (\url{www.r-project.org}, R Core Team, Vienna, Austria) and it contains the claims experience from a large midwestern (US) property and casualty insurer for private passenger
automobile insurance. The dependent variable is the amount paid on a closed claim, in (US) dollars
(claims that were not closed by year end are handled separately) and  policyholders are categorized
according to a risk classification system based on a rating class of operator depending on age, gender, marital status, use of vehicle.  Next, we focus on the classes \textsf{C1A}, \textsf{C1B}, \textsf{C71}, \textsf{C72}, \textsf{C7A}, \textsf{C7C}, \textsf{F11} and \textsf{F71}. From now on, let $Y$ be the corresponding claims for the \textsf{F11} class. Our objective is to analyze if $X\leq_{p_0\textup{-tvar-rl}}Y$ where $X$ represents the corresponding claims for any of rest of classes.

First, let us see that $X\nleq_{hr} Y$. It is known that the definition of the hazard rate order can be rewritten in terms of a property of the plot of the two survival functions. A subset $A$ of the Euclidean space is called star-shaped with respect to a point $s$ if for every $x \in A$ we have that $A$ contains the whole line segment between $x$ and $s$. A real function $f$ is called star-shaped with respect to a point $(a, b)$, if its epigraph is star-shaped with respect to $(a, b)$. Now, if we consider the plot of the points $(\overline F (t), \overline G(t))$ (in short $\overline P - \overline P$ plot), we have that $X \leq_{hr} Y$, if and only if, the $\bar P - \bar P$ plot is star-shaped with respect to $(0, 0)$ (see M\"uller and Stoyan, 2002, p. 9). In Figure \ref{examplefigure} we give the empirical $\overline P - \overline P$ plots of the samples, that is, the plot of the points $(\overline F_n(t), \overline G_n(t))$, where $\overline F_n$ and $\overline G_n$ are the empirical
survival functions of $X$ and $Y$, respectively. From the $\overline P - \overline P$ plot, it is clear that the hazard rate order does not hold. Now, it is natural to wonder if $X\leq_{p_0\textup{-tvar-rl}}Y$ for any $p_0\in (0,1)$.
\begin{figure}
\centering
\includegraphics[width=0.75\textwidth, trim=0cm 0.6cm 0cm 0.6cm]{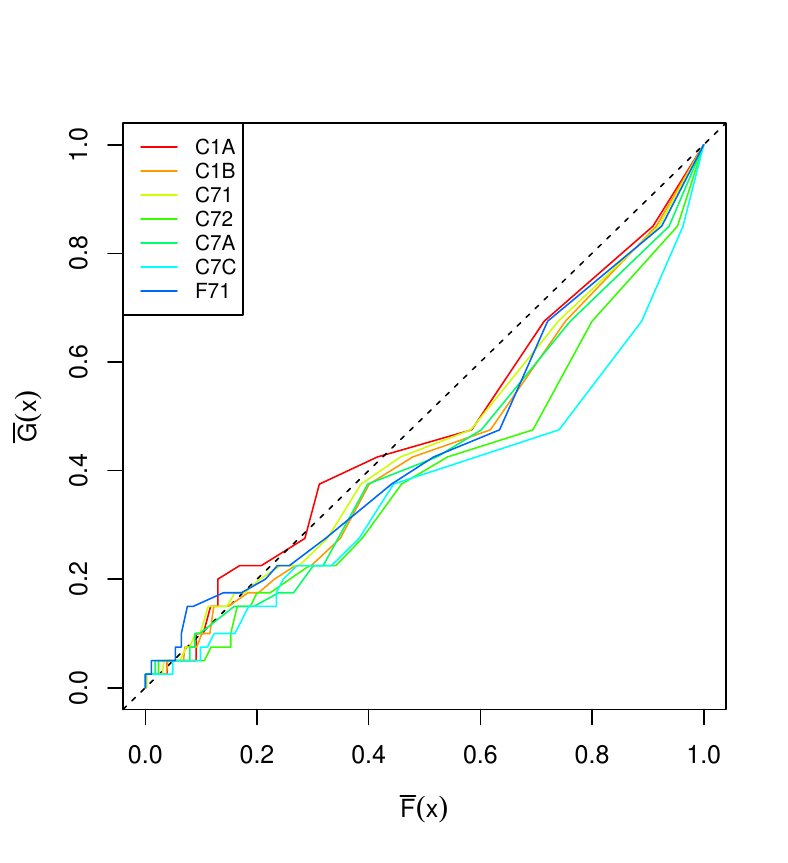}
\caption{Empirical $\overline P - \overline P$ plot.}
\label{examplefigure}
\end{figure}

As mentioned by Arnold (2005, p. 17) and references therein, the potential fields of application for Pareto and/or Pareto related models includes modeling claim premiums. One of the most used Pareto distributions is the classical one, mainly due to its small number of parameters.  Recall that in Example \ref{pareto2} we provide conditions for the comparison between two classical Pareto distributions. In particular, if $X\sim P(a_X,k_X)$ and $Y\sim P(a_Y,k_Y)$, $X\leq_{p_0\textup{-tvar-rl}}Y$ whenever $a_X>a_Y>1$, $k_X>k_Y$ and $a_X k_X/(a_X-1)>a_Y k_Y/(a_Y-1)$.

Taking into account these ideas, first we fit a classical Pareto distribution for each of the classes by using the \textsf{R}-package \textsf{fitdistrplus}. The computed parameters as well as the corresponding values of $ak/(a-1)$ are shown in Table \ref{table1}. From the value of the corresponding Kolmogorov-Smirnov statistic (see last column in Table \ref{table1}), we can assume that our datasets follow a Pareto distribution. It is worth to mention that other distributions could offer a greater goodness of fit, but the aim of this example is to use our previous results. 
\begin{table}
\begin{center}
\begin{tabular}{ccccc}
& \multicolumn{2}{c}{Parameters} & & Test statistic\\ \cline{2-3} \cline{5-5}
 & $k$  & $a$ &  $ak/(a-1)$ & K-S \\ \hline
\textsf{C1A} & $4413.1532$ &  $3.5435$ & 6148.175 & 0.1239\\
\textsf{C1B} & $7360.4283$ &  $4.8540$ & 9270.204 & 0.0893\\
\textsf{C71} & $7204.7579$ &  $5.0193$ &  8997.277 & 0.0974\\
\textsf{C72} & $10548.676$ &  $5.8600$ & 12719.15 & 0.1260\\ 
\textsf{C7A} & $20830.5147$ &  $11.9036$ & 22740.94 & 0.1068\\
\textsf{C7C} & $17011.9103$ &  $8.7029$ & 19220.39 & 0.1339\\ 
\textsf{F11} & $2655.6875$ &  $2.3717$ & 4591.691 & 0.1465\\
\textsf{F71} & $68547.290$ &  $43.8327$ & 70147.64 & 0.0875\\ \hline
\end{tabular}
\end{center}
\caption{Fitted classical Pareto models.}\label{table1}
\end{table}

Finally, it is easy to see that these values satisfy the conditions for $X\leq_{p_0\textup{-tvar-rl}}Y$ where $X$ represents the claims of \textsf{C1A}, \textsf{C1B}, \textsf{C71},  \textsf{C72}, \textsf{C7A}, \textsf{C7C} or \textsf{F71} and $Y$ represents the corresponding claims of \textsf{F11}. 
Besides, Table \ref{table2} shows the $p_0$ values for which it holds $X\leq_{p_0\textup{-tvar-rl}}Y$ for each pair of claims, according to \eqref{p0pareto}.


\begin{table}
\begin{center}
\begin{tabular}{cccccccc}
& \textsf{C1A} & \textsf{C1B} & \textsf{C71} & \textsf{C72} & \textsf{C7A} & \textsf{C7C} & \textsf{F71}\\ \hline
$p_0$ &  0.8767 & 0.9615  & 0.9513 & 0.9827 & 0.9912 & 0.9906 & 0.9999\\ \hline
\end{tabular}
\end{center}
\caption{$p_0$ values.}\label{table2}
\end{table}

\section{Conclusions and some comments}\label{sec:6}

Comparing assets in terms of their Value at Risk (VaR) or Tail Value at Risk (TVaR) is one of the main tools in insurance in order to analyze the perform of risk assesment. Bank risk managers follow the Basel Committee on Banking Supervision  recommendations that recently promoted shifting the quantitative risk metrics system from
VaR to TVaR. In particular, the use of the Tail Value at Risk (TVaR) offers more representative information on risks. However,  these comparisons do not consider the whole risk and, in some situations, they are not necessary for all $p$ (the focus is usually on large values of $p$). 

In this paper, we have proposed a new family of stochastic orders indexed by $p_0\in(0,1)$, that compares the tail values at risk of two conditional random variables from a certain probability $p_0$ and beyond. More precisely, given two random variables $X$ and $Y$, it is said that $X$ is smaller than $Y$ in the $p_0\textup{-tvar-rl}$ order, denoted as $X\leq_{p_0\textup{-tvar-rl}}Y$, if
 \[
\mathrm{TVaR}[X_t;p]\leq \mathrm{TVaR}[Y_t;p], \text{ for all } t<u_X,u_Y, \text{ and }p\in[p_0,1),
\]
where $X_t$ and $Y_t$ are the residual lives of $X$ and $Y$, respectively. 

For two random risks $X$ and $Y$, previous comparisons are of interest, for instance, when insurance companies go to reinsurance companies to protect their capital against possible large claims via a stop loss contract or when an insurance company offers a contract with some franchise deductible. Besides, it is useful in medical research where the remaining lifetime of the patients is considered. 

According to Denuit et al. (2005, p. 89), the Wang risk measure, $C_g $, of a random variable $ X $ is defined as
\[
C_g(X)=\int_0^{+\infty}g(\ov {F}(x))dx,
\]
where $g$ is a nondecreasing distortion such that $g(0)=0$ and $g(1)=1$. The  increasing convex order (stop-loss), or equivalently, the comparison of the tails of the Value at Risk, can be characterized by comparing the Wang risk measures. More specifically, we have that
\[
TVaR[X,p] \leq TVaR[Y,p], \textup{ for all } p \in (0,1) \textup{ if, and only if, } C_g(X) \leq C_g(Y),
\]
for any concave distortion $g$ (see, for example, Denuit et al., 2005, pp. 152-154, Hong, Karni and Safra, 1987, and Wang and Young, 1998). Unfortunately, we cannot extend this characterization when comparing the $TVaR$'s from $p_0$ on.

Along this work, we have shown relationships between the new order and some other stochastic comparisons of interest in risk theory. In particular, it is worth to mention that it is closely related to the $p\textup{-rl}$ and the mrl orders. In addition, we have provided some preservation properties such as the closure under convergence and increasing convex transformations, and  sufficient conditions for this order to hold. Finally, some theoretical examples dealing with well-known risk models like the Pareto, the loglogistic and the distorted distributions, and a real-data example were described from a methodological perspective.


\begin{thebibliography}{40}
\bibitem{Arnold2014} Arnold, B.C. (2014). Univariate and multivariate Pareto models. \textit{Journal of Statistical Distributions and Applications} \textbf{1}, 1--16.

\bibitem{Arnold2015} Arnold, B.C. (2015). \textit{Pareto Distributions (Second Edition)}. Chapman and Hall/CRC, Boca Raton, Florida.

\bibitem{B92} Balakrishnan, N. (1992). \textit{Handbook of the Logistic Distribution}. Marcel Dekker, New York.

\bibitem{BP} Barlow, R.E. and Proschan, F. (1975). \textit{Statistical Theory of Reliability and Life Testing}. Holt, Rinehart and Winston, Inc., New York.

\bibitem{BMRM2015} Belzunce, F., Martínez-Riquelme, C. and Mulero, J. (2016). \textit{An Introduction to Stochastic Orders}. Academic Press. Elsevier Ltd, London.

\bibitem{Berrendero} Berrendero, J.R., C\'arcamo, J. (2012). Tests for the second order stochastic dominance based on $L$-statistics. \textit{Journal of Business \& Economic Statistics } \textbf{29}, 270--280.

\bibitem{Denneberg} Denneberg, D. (1990). Premium calculation: why standard deviation should be replaced by absolute deviation. \textit{ASTIN Bulletin} \textbf{20}, 181--190.

\bibitem{Denuit} Denuit, M., Dhaene, J., Goovaerts, M. and Kaas, R. (2005). \textit{Actuarial Theory for Dependent Risks: Measures, Orders and Models}. Wiley \& Sons, New York.

\bibitem{EPRWB2014} Embrechts, P., Puccetti, G., R\"uschendorf, L., Wang, R. and
and Beleraj, A. (2014). An academic response to Basel 3.5. \textit{Risks} \textbf{2}, 25--48.

\bibitem{Alba2011a} Franco-Pereira, A.M., Lillo, R.E., Romo, J. and Shaked, M. (2011). Percentile residual life orders. \textit{Applied Stochastic Models in Business and Industry} \textbf{27}, 235--252.

\bibitem{CC}Hong, C.S, Karni, E. and Safra, Z. (1987). Risk aversion in the theory of expected utility with rank dependent probabilities. \textit{Journal of Economic Theory} \textbf{42}, 370--381.

\bibitem{Hurlimann2002} H\"urlimann, W. (2002). Analytical bounds for two value-at-risk functionals. \textit{ASTIN Bulletin} \textbf{32}, 235--265.

\bibitem{Hurlimann2003} H\"urlimann, W. (2003). Conditional value-at-risk bounds for compound Poisson risks and a normal approximation. \textit{Journal of Applied Mathematics} \textbf{3}, 141--153.

\bibitem{Karlin68} Karlin, S. (1968). \textit{Total Positivity}. Stanford University Press, Palo Alto, California.

\bibitem{Lillo2005} Lillo, R.E. (2005). On the median residual lifetime and its aging properties: a characterization theorem and applications. \textit{Naval Research Logistics} \textbf{52}, 370--380.

\bibitem{NCA2017} Nadarajah, S., Chan, S. and Afuecheta, E. (2017). Tabulations for Value at Risk and expected shortfall. \textit{Communications in Statistics - Theory and Methods} \textbf{46}, 5956--5984. 

\bibitem{Muller}M\"uller, A. (1996). Ordering of risks: A comparative study via stop-loss functions. \textit{Insurance: Mathematics and Economics} \textbf{17}, 215--222.

\bibitem{SS} Shaked, M. and Shanthikumar, J.G. (2007). \textsl{Stochastic Orders}. Springer Series in Statistics. Springer, New York.

\bibitem{VDV98} van der Vaart, A.W. (1998). \textit{Asymptotic Statistics}. Cambridge University Press,
Cambridge.

\bibitem{Wang1995} Wang, S. (1995). Insurance pricing and increased limits ratemaking by proportional
hazards transforms. \textit{Insurance: Mathematics and Economics} \textbf{17}, 43--54.

\bibitem{Wang1996} Wang, S. (1996). Premium calculation by transforming the layer premium density.
\newblock{ASTIN Bulletin} \textbf{26}, 71--92.

\bibitem{WangY} Wang, S. and Young, V.R. (1998). Risk-adjusted credibility premiums used distorted probabilities. \textit{Scandinavian Actuarial Journal}, \textbf{1998}, 143--165.
\end{thebibliography}
\end{document}